\theoremstyle{plain}
\newtheorem{Thm}{Theorem}[section]
\newtheorem*{thmnonum}{Theorem}
\newtheorem{Lem}[Thm]{Lemma}
\newtheorem{Cor}[Thm]{Corollary}
\theoremstyle{definition}
\newtheorem{Def}[Thm]{Definition}
\theoremstyle{remark}
\numberwithin{equation}{section}
\newcommand\tylda[1]{\ThisStyle{%
		\setbox0=\hbox{$\SavedStyle#1$}%
		\stackengine{-.1\LMpt}{$\SavedStyle#1$}{%
			\stretchto{\scaleto{\SavedStyle\mkern.2mu\AC}{.5150\wd0}}{.6\ht0}%
		}{O}{c}{F}{T}{S}%
}}
\newcommand{\myData}[1][]{
\author[D.\ Burek]{Dominik Burek}
\address{D.\ Burek{}\\{}
Faculty of Mathematics and Computer Science\\{}Jagiellonian University,
\L{}ojasiewicza 6\\{}30-348 Krak\'{o}w\\{}Poland}
\email{dominik.burek@uj.edu.pl}
}
\newcommand{\myCurrentData}[1][]{
\address{Current address: 
Leibniz Universit{\"a}t Hannover\\{}Fakult{\"a}t f{\"u}r Mathematik und Physik \\{} Welfengarten 1\\{}30167 Hannover}}
\newcommand{\ZZ}{\mathbb{Z}}
\newcommand{\CC}{\mathbb{C}}
\newcommand{\PP}{\mathbb{P}}
\newcommand{\OO}{\mathcal{O}}
\newcommand{\FF}{\mathbb{F}}
\newcommand{\diag}{\operatorname{diag}}
\newcommand{\QQ}{\mathbb{Q}}
\newcommand{\et}{\operatorname{\acute{e}t}}
\newcommand{\id}{\operatorname{id}}
\newcommand{\Fix}{\operatorname{Fix}}
\newcommand{\Frob}{\operatorname{Frob}}
\newcommand{\kum}{\operatorname{Kum}}
\newcommand{\orb}{\operatorname{orb}}
\newcommand*\dd{\mathop{}\!\mathrm{d}}
\newcommand{\age}{\operatorname{age}}
\newcommand*\diff{\mathop{}\!\mathrm{d}}
\newcommand{\linia}{\rule{\linewidth}{0.3mm}}
\newcommand{\overbar}[1]{\mkern 1.5mu\overline{\mkern-1.5mu#1\mkern-1.5mu}\mkern 1.5mu}
\renewcommand{\bar}{\overbar}
\begin{document}

	\title{Zeta function of some Kummer Calabi-Yau 3-folds}
	\myData
	
	\begin{abstract} We compute Hodge numbers and zeta function of a Kummer Calabi-Yau $3$-folds introduced by M. Andreatta and J. Wi{\'s}niewski in \cite{Wisa} and investigated by M. Donten-Bury in \cite{Donten}.		 \end{abstract}
	
	\subjclass[2010]{Primary 14J32; Secondary 14J30, 14G10}
	\keywords{Calabi-Yau manifolds, Zeta function, Chen-Ruan cohomology}
	\thanks{Data sharing not applicable to this article as no datasets were generated or analysed during the current study.}

	\maketitle
	
	
\section{Introduction}

The classical Kummer construction involves an abelian surface $A$ over $\CC$ and an involution $i\colon A\to A.$ The quotient $X:=A/i$ is a normal surface having $16$ rational singularities of type $A_{1}.$ The minimal resolution of singularities $\tylda{X}$ of $X$ is a $K3$ surface.
 
Various ideas have been used to give generalizations of the Kummer construction. In \cite{UenoK} the author describes the generalized Kummer manifold us an algebraic variety $X$ for which there exists an abelian variety $A$ and a generically surjective rational map $A\to X.$ S. Cynk and K. Hulek in \cite{CH} imitated the classical Kummer construction in the sense that
they considered pairs of varieties $X,$ $Y$ with involutions $i_{X}$ and $i_{Y}.$ Then the authors derived a criterion for when the quotient $(X\times Y)/(i_{X}\times i_{Y})$
admits a crepant resolution of singularities giving a Calabi-Yau manifold. In the paper \cite{CynkJNT} Cynk and M. Sch{\"u}tt using Cynk-Hulek generalised Kummer construction and the Weil restrictions discussed resulting Calabi-Yau manifolds over number fields. For $n=3,$ the construction was exhibited in \cite{BorceaCM} in the context of Calabi-Yau threefold with complex multiplication. 

In \cite{Wisa} M. Andreatta and J. Wi{\'s}niewski gave another generalization, $\textup{Kum}_{n}(A,G)$, of the classical Kummer construction by resolving singularities of the quotient of a complex abelian variety $A$ of dimension $n$ by a finite integral matrix group $G$ in $\textup{GL}_{n}(\ZZ)$. Incidentally, holomorphic symplectic manifolds introduced by A. Beauville in \cite{Bowil2} are examples of manifolds $\textup{Kum}_{n}(A,G)$ for some group $G$. Andreatta and Wi{\'s}niewski gave also a new method to compute the cohomology of the constructed Kummer variety. In the case of Beauville's holomorphic symplectic manifolds the cohomology has been computed by using another idea by L. G{\"o}ttsche and W. Soergel in \cite{Gecze}, B. Fantechi and L. G{\"o}ttsche in \cite{Fantechi}. In dimension $3$, under some restrictions on the resolution of singularities and the group action, the resulting variety is a Calabi-Yau threefold. 

M. Donten-Bury in \cite{Donten} investigated the Andreatta-Wi{\'s}niewski construction in the case of $A=E^{3}$ -- a triple product of an elliptic curve $E,$ together with an action of a finite subgroup of $\textup{SL}_{3}(\ZZ)$ on $E^{3}.$ The complete classification of finite subgroups of $\textup{SL}_{3}(\ZZ)$ was given in \cite{tahara}, Donten-Bury classified these subgroups using different idea. Following Andreatta-Wi{\'s}niewski paper, the author gave detailed computations of the Poincar{\'e} polynomial of a crepant resolution $\textup{Kum}_{3}(E,G)$ of the quotient $E^{3}/G.$

The aim of this work is to give a formula for the Hodge numbers and local zeta function of $\textup{Kum}_{3}(E,G)$ using the Chen-Ruan orbifold cohomology theory (\cite{CR}) and the description of the Frobenius action on the orbifold cohomology (\cite{Rose}). The work is motivated by the limited range of examples of Calabi-Yau manifolds which are neither complete intersection in projective spaces nor hypersurfaces in toric space with explicit computed zeta functions. 

 Our main result is: \begin{thmnonum} Hodge numbers and zeta functions of $\kum_{3}(E,G)$ are given in tables from section \ref{results}.  \end{thmnonum}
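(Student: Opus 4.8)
The plan is to compute everything through the Chen--Ruan orbifold cohomology of the quotient stack $[E^3/G]$ together with the behaviour of the Frobenius on its $\ell$-adic realisation, exploiting the fact that $\kum_3(E,G)$ is by construction a crepant resolution of $E^3/G$, so that the stringy (orbifold) invariants of the quotient coincide with the honest Hodge numbers and $\ell$-adic cohomology of the resolution.

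First I would recall Donten-Bury's classification of the relevant finite subgroups $G\subset \SL_3(\ZZ)$ and, for each, fix a model of $E^3$ with its $G$-action; for the arithmetic statements one works over $\QQ$ (or the field of definition of $E$) and reduces modulo primes of good reduction. For each $G$ one enumerates the conjugacy classes $[g]$, and for a representative $g$ one determines (i) the fixed locus $(E^3)^g$, which is a finite disjoint union of translated abelian subvarieties of $E^3$ -- here the concrete $3\times 3$ integral matrices make the linear algebra completely explicit -- (ii) the age $\age(g)$, read off from the eigenvalues of $g$ on the tangent space of $E^3$, and (iii) the action of the centraliser $C_G(g)$ on $H^*((E^3)^g)$ together with its Hodge structure. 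Assembling the twisted sectors,
\[
H^{p,q}_{CR}\bigl([E^3/G]\bigr)=\bigoplus_{[g]}\Bigl(H^{p-\age(g),\,q-\age(g)}\bigl((E^3)^g\bigr)\Bigr)^{C_G(g)},
\]
and the McKay correspondence for crepant resolutions (valid in dimension three) identifies the left-hand side with $H^{p,q}\bigl(\kum_3(E,G)\bigr)$; this yields the Hodge-number tables, with the Calabi--Yau conditions $h^{1,0}=h^{2,0}=0$ and $h^{3,0}=1$ serving as a built-in consistency check.

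For the zeta function I would run the same decomposition on $\ell$-adic cohomology, using Rose's description of the Frobenius on Chen--Ruan cohomology: the geometric Frobenius permutes the twisted sectors according to its action on the set of conjugacy classes of $G$, and on the sector attached to a Frobenius-stable class $[g]$ it acts on $H^*((E^3)^g)^{C_G(g)}$ up to the Tate twist $(-\age(g))$ coming from the fermionic shift. Since each $(E^3)^g$ is a translate of an abelian variety, its $\ell$-adic cohomology -- hence all the local factors -- is expressed through the Frobenius eigenvalues $\alpha,\bar\alpha$ on $H^1(E)$ with $\alpha\bar\alpha=q$ and $\alpha+\bar\alpha=a_q(E)$; collecting the contributions sector by sector gives the local zeta function $Z\bigl(\kum_3(E,G)/\FF_q,T\bigr)$ as an explicit rational function in $T$, $q$ and $a_q(E)$, which is what the tables record. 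The main obstacle is the bookkeeping rather than any single hard theorem: one must, uniformly across Donten-Bury's list, pin down the fixed loci $(E^3)^g$ \emph{together with} the centraliser actions on their cohomology and the way the Frobenius permutes and twists the sectors -- in particular keeping track of which group elements and which components of the fixed loci are rational over $\FF_q$, and of the eigenvalue multiplicities entering the age -- while a secondary subtlety is to confirm that each group on the list genuinely admits a crepant resolution with the Calabi--Yau property, which is exactly where the Andreatta--Wi\'sniewski and Donten-Bury constructions are invoked.
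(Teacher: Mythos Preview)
Your proposal is correct and follows essentially the same route as the paper: compute Hodge numbers via the Chen--Ruan decomposition of $H^*_{\orb}([E^3/G])$ together with Yasuda's theorem identifying orbifold and crepant-resolution cohomology, and compute zeta functions via Rose's orbifold Frobenius, sector by sector, with the Tate twist by $\age(g)$ and the Frobenius permutation of the components of $(E^3)^g$ (indexed by torsion points of $E$) recorded as the ``cases'' in the tables. The paper isolates two small lemmas you leave implicit---that $H^{1,1}(E_q^3)^G$ is spanned by explicit divisors defined over $\FF_q$ (so the even cohomology contributes only powers of $(1-q^iT)$), and an explicit identification of the Frobenius polynomial on $H^3(E_q^3)^G$---but these fall out naturally from the computation you outline.
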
\
 In \cite{Burii1}, \cite{Burii3}, \cite{Burii4} we successfully used that approach in order to compute Hodge numbers and zeta function of manifolds $(X_1\times X_2 \times \ldots \times X_k)/G,$ where $X_i$ ($i=1,2,\ldots, k$) are manifolds of a Calabi-Yau type (i.e. $K_{X_{i}}= \OO_{X_{i}}$) and $G$ is a finite group acting on a product variety $X_1\times X_2 \times \ldots \times X_k.$ All computed examples covered: the famous Borcea-Voisin Calabi-Yau threefolds (\cite{BorceaC}, \cite{V}), $n$-dimensional Calabi-Yau manifold of a Borcea-Voisin type (see \cite{C} for $n=3$, and \cite{Burii4} for an arbitrary dimension $n$).

In sections \ref{chen} and \ref{zeta} we briefly discuss Chen-Ruan cohomology theory and orbifold zeta function. In section 4 we give detailed computations of Hodge numbers and zeta functions of $\textup{Kum}_{3}(E,G)$, for some group $G\in \textup{SL}_{3}(\ZZ)$ of order 24. In the section \ref{results} we give results of computations for all remaining groups.
\subsection*{Acknowledgments} I would like to thank S\l{}awomir Cynk for helpful suggestions and comments. The author is supported by the National Science Center of Poland grant no. 2019/33/N/ST1/01502 and National Science Center of Poland grant no. 2020/36/T/ST1/00265. 

Part of this work was conducted during the stay at the Leibniz Universit{\"a}t in Hannover. I would like to thank the institute for hospitality. Finally, I would like to thank the anonymous referee for their careful reading and helpful suggestions.

\section{Chen-Ruan cohomology}
\label{chen}
In \cite{CR} W. Chen and Y. Ruan introduced a new cohomology theory for an orbifold.   

\begin{Def}\label{Formula} Let $X$ be a projective variety and $G$ be a finite group which acts on $X.$ For a variety $X/G$ define the \textit{Chen-Ruan cohomology} by \begin{equation}\label{orbfor} H_{\textrm{orb}}^{i,j}(X/G):=\bigoplus_{[g]\in\textrm{Conj}(G)}\left(\bigoplus_{U\in \Lambda(g)} H^{i- \age(g),\; j-\age(g)}(U)\right)^{\textup{C}(g)}, \end{equation} where $\textrm{Conj}(G)$ is the set of conjugacy classes of $G$ (we choose a representative $g$ of each conjugacy class), $\textup{C}(g)$ is the centralizer of $g$, $\Lambda(g)$ denotes the set of irreducible connected components of the set fixed by $g\in G$ and $\age(g)$ is the age of the matrix of linearised action of $g$ near a point of $U.$ \par The dimension of $ H_{\textrm{orb}}^{i,j}(X/G)$ will be denoted by $h_{\text{orb}}^{i,j}(X/G).$\end{Def}

An important feature of the Chen-Ruan cohomology is the possibility of computing Hodge numbers of a crepant resolution of singularities of a quotient variety, without referring to an explicit construction of such a resolution i.e.

\begin{Thm}[\cite{TY}, Cor. 3.16] Let $X$ and $X'$ be complete varieties with Gorenstein quotient singularities. Suppose that there are proper birational morphisms $Z\to X$ and $Z\to X'$ such that $K_{Z/X} = K_{Z/X'}.$ Then the orbifold cohomology groups of $X$ and $X'$ have the same Hodge structure.
 \end{Thm}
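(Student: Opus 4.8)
The plan is to realize the orbifold cohomology as a motivic invariant and to show that this invariant depends only on the relative canonical divisor over a common resolution. Concretely, I would pass to the stringy $E$-function $E_{\mathrm{st}}$ introduced by Batyrev and reinterpreted through motivic integration by Denef and Loeser. For a complete variety $W$ with at worst Gorenstein (hence log-terminal) quotient singularities and a log resolution $f\colon Z\to W$ with $K_{Z/W}=\sum_{i\in I}a_iD_i$ a simple normal crossing divisor, Batyrev's formula reads
\begin{equation*}
E_{\mathrm{st}}(W)=\sum_{J\subseteq I}E\bigl(D_J^{\circ}\bigr)\prod_{j\in J}\frac{uv-1}{(uv)^{a_j+1}-1},
\end{equation*}
where $D_J^{\circ}$ is the locally closed stratum cut out by exactly the components indexed by $J$. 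The first step is to recall that the Gorenstein hypothesis forces every discrepancy $a_i$ to be a non-negative integer, so each factor above expands as a geometric series in integral powers of $uv$; this is what guarantees that the contributions sit in integral Tate twists and hence assemble into an honest bigraded Hodge structure rather than a merely numerical invariant.

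Next I would exploit the hypothesis $K_{Z/X}=K_{Z/X'}$ directly at the level of the arc space. By the Denef--Loeser change-of-variables formula, $E_{\mathrm{st}}(X)$ (resp.\ $E_{\mathrm{st}}(X')$) equals the motivic integral over the arc space $J_\infty(Z)$ of the integrand $\mathbb{L}^{-\operatorname{ord}_{K_{Z/X}}}$ (resp.\ $\mathbb{L}^{-\operatorname{ord}_{K_{Z/X'}}}$), taken with values in a suitable localization of the Grothendieck ring of varieties. Since the two relative canonical divisors coincide as divisors on the single resolution $Z$, the two integrands are literally equal as functions on $J_\infty(Z)$, so the integrals agree term by term. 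This yields $E_{\mathrm{st}}(X)=E_{\mathrm{st}}(X')$, and more importantly the identity holds already in the Grothendieck ring of Hodge structures, so the virtual Hodge structures -- not just the Hodge numbers -- coincide.

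The final and essential step is to identify this motivic/stringy invariant with the Chen--Ruan Hodge structure of Definition \ref{Formula}. For a global quotient $W=V/G$ this is Yasuda's comparison theorem, proved through motivic integration over the quotient stack $[V/G]$: the space of twisted arcs decomposes according to conjugacy classes $[g]\in\operatorname{Conj}(G)$, the contribution of $[g]$ is the motivic class of the fixed locus $V^{g}$ taken $\mathrm{C}(g)$-equivariantly, and the $\mathbb{L}$-shift recorded by a twisted arc is exactly $\age(g)$, matching the age shift in \eqref{orbfor}. Tracking the bigrading through this identification gives $H^{i,j}_{\textrm{orb}}(W)$ as the graded piece of the stringy invariant in bidegree $(i,j)$. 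Applying this to both $X$ and $X'$ and combining with the equality of stringy invariants from the previous step produces the asserted isomorphism of orbifold Hodge structures. The main obstacle is precisely this last comparison: one must set up motivic integration on Deligne--Mumford stacks and verify that the twisted-jet order function reproduces the age, and then upgrade the resulting numerical identity to an identity of Hodge structures by working throughout in the Grothendieck ring of (mixed) Hodge structures and checking that the relevant strata contribute pure, Tate-twisted pieces.
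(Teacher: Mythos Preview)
The paper does not supply its own proof of this theorem: it is quoted verbatim as Corollary~3.16 of \cite{TY} and used as a black box. There is therefore no in-paper argument to compare against.

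That said, your outline is an accurate summary of how the result is established in the cited reference. Yasuda's proof proceeds exactly through motivic integration on spaces of twisted jets over the associated Deligne--Mumford stack, and the invariance under the hypothesis $K_{Z/X}=K_{Z/X'}$ is obtained via the change-of-variables formula, as you describe. One point worth flagging: the statement concerns varieties with Gorenstein \emph{quotient singularities}, which need not be global quotients $V/G$; your final paragraph passes through the global-quotient picture before acknowledging that the stacky setup is required. In \cite{TY} the comparison between the stringy invariant and the orbifold (Chen--Ruan) Hodge structure is carried out directly at the level of the canonical Deligne--Mumford stack associated to a variety with quotient singularities, so the global-quotient description you give is a heuristic rather than the actual argument. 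Otherwise your sketch matches the source.
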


As a special case we have:

\begin{Thm}\label{TY3}
	Let $G$ be a finite group acting on an algebraic smooth variety $X$. If there exists a crepant resolution $\tylda{X/G}$ of variety $X/G,$ then the following equality holds $$h^{i,j}(\tylda{X/G})=h^{i,j}_{\orb}(X/G).$$
\end{Thm}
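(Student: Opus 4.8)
The plan is to deduce Theorem~\ref{TY3} as a direct corollary of the previously stated result of Yasuda (\cite{TY}, Cor. 3.16), by exhibiting a common resolution of $X/G$ and of the smooth variety $\tylda{X/G}$ that is crepant over both. Concretely, I would first record the definitional input: since $\tylda{X/G}$ is smooth and $X$ is smooth, the orbifold cohomology of $X/G$ is computed by \eqref{orbfor}, while for the smooth variety $\tylda{X/G}$ the orbifold cohomology (with trivial group) is just ordinary cohomology, so $h^{i,j}_{\orb}(\tylda{X/G}) = h^{i,j}(\tylda{X/G})$. Thus it suffices to show $h^{i,j}_{\orb}(X/G) = h^{i,j}_{\orb}(\tylda{X/G})$, i.e. that the two varieties $X/G$ and $\tylda{X/G}$ have orbifold cohomology with the same Hodge structure.

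Next I would verify the hypotheses of \cite{TY}, Cor. 3.16 with $X \rightsquigarrow X/G$ and $X' \rightsquigarrow \tylda{X/G}$. Both $X/G$ (quotient of a smooth variety by a finite group) and $\tylda{X/G}$ (smooth) have Gorenstein quotient singularities — here the crepancy assumption on the resolution is exactly what guarantees $X/G$ is Gorenstein, since crepancy presupposes $K_{X/G}$ is $\QQ$-Cartier and in fact the discrepancies being zero forces the index-one (Gorenstein) condition in the relevant cases; alternatively one invokes that a quotient of a smooth variety by a finite subgroup of $\SL$ acting without quasi-reflections has canonical Gorenstein singularities. Then I would take $Z := \tylda{X/G}$ itself, with $Z \to X/G$ the given crepant resolution and $Z \to \tylda{X/G}$ the identity; crepancy of the resolution says $K_{Z/(X/G)} = 0$, and $K_{Z/\tylda{X/G}} = 0$ trivially, so $K_{Z/(X/G)} = K_{Z/\tylda{X/G}}$, and both morphisms are proper birational. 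Yasuda's theorem then yields that $H^{\bullet}_{\orb}(X/G)$ and $H^{\bullet}_{\orb}(\tylda{X/G})$ carry the same Hodge structure, hence the same Hodge numbers, which combined with the first paragraph gives $h^{i,j}(\tylda{X/G}) = h^{i,j}_{\orb}(X/G)$.

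The only genuinely delicate point is the Gorenstein claim for $X/G$: Yasuda's corollary as quoted requires Gorenstein quotient singularities, whereas \emph{a priori} a crepant resolution only needs $K_{X/G}$ to be $\QQ$-Cartier. In the setting of this paper $X$ is smooth and $G$ acts (one may reduce to the action being free in codimension one), so $X/G$ has quotient singularities; the existence of a crepant resolution forces the discrepancies of exceptional divisors to be integers equal to zero, and a standard argument (pulling back a local generator of $\omega_{X/G}^{\otimes m}$) then shows the index is $1$. I would spell this out in a sentence, or simply cite the standard fact that under a crepant resolution the base has canonical Gorenstein singularities, so no further obstacle remains and the corollary applies verbatim.
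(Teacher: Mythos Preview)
Your proposal is correct and matches the paper's approach: the paper does not give an explicit proof but simply presents Theorem~\ref{TY3} as ``a special case'' of Yasuda's result (\cite{TY}, Cor.~3.16), and your argument---taking $Z = \tylda{X/G}$ with the identity to $X' = \tylda{X/G}$ and the given crepant morphism to $X/G$---is precisely the unpacking of that remark. Your discussion of the Gorenstein hypothesis is more careful than anything the paper provides, and is a reasonable point to flag, though in the paper's applications $G \subset \SL$ so the quotient is Gorenstein for the standard reason.
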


For a systematic exposition of the orbifold Chen-Ruan cohomology see \cite{Leida}.

\section{The orbifold zeta function}
\label{zeta}




Let $q$ be a prime power. For a smooth, tame, Deligne-Mumford $\FF_{q}$-stack $\mathcal{X}$ Rose in \cite{Rose} defined \textit{$\ell$-adic Chen-Ruan orbifold cohomology} of $\mathcal{X}\times_{\FF_{q}}\bar{\FF}_{q}$ denoted by $H^{*}_{\textrm{CR}}\left(\mathcal{X}\times_{\FF_{q}}\bar{\FF}_{q}, \QQ_{l}\right)$ (Def. 3.1), introduced the \textit{orbifold Frobenius morphisms} $\Frob_{\orb}$ on $H^{*}_{\textrm{CR}}\left(\mathcal{X}\times_{\FF_{q}}\bar{\FF}_{q}, \QQ_{l}\right)$ (Prop. 1.1) and defined (Def. 6.1) the \textit{orbifold zeta function} by 
\begin{equation*}\label{rosee} Z_{\textrm{CR}}(\mathcal{X}, t):=\det \left(1-\Frob_{\orb}t \mid H^{*}_{\textrm{CR}}\left(\mathcal{X}\times_{\FF_{q}}\bar{\FF}_{q}, \QQ_{l}\right)\right). \end{equation*} It turns out that the orbifold zeta function of a crepant desingularization coincide with the usual zeta function i.e.

\begin{Thm}[\cite{Rose}, Cor. 6.4] Let $\mathcal{X}$ be a proper, smooth, tame Deligne-Mumford stack satisfying the hard Lefschetz condition with trivial generic stabilizer. Suppose $\tylda{X}\to X$ is a crepant resolution of the coarse moduli scheme $X$ of $\mathcal{X},$ then
$$Z_{H^{*}_{\textrm{CR}}}(\mathcal{X}, t)=Z_{q}(\tylda{X},t).$$
where $Z_{q}(\tylda{X},t)$ denotes the classical zeta function $X.$ \end{Thm}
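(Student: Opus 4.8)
\emph{Proof plan.} The plan is to reduce both sides of the claimed equality to generating series of point counts over the extensions $\FF_{q^{n}}$ and to match these series term by term. For the right-hand side this is immediate: since $\tylda{X}$ is smooth and proper, the Grothendieck--Lefschetz trace formula gives $\#\tylda{X}(\FF_{q^{n}})=\sum_{i}(-1)^{i}\tr\!\big(\Frob^{n}\mid H^{i}_{\et}(\tylda{X}\times_{\FF_{q}}\bar{\FF}_{q},\QQl)\big)$, so that $Z_{q}(\tylda{X},t)=\exp\big(\sum_{n\ge 1}\#\tylda{X}(\FF_{q^{n}})\,t^{n}/n\big)$. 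For the left-hand side I would first unwind the definition of $\Frob_{\orb}$: because $H^{*}_{\textrm{CR}}$ is built from the cohomology of the inertia stack $I\mathcal{X}$ with each $[g]$-sector Tate-twisted by the age $\age(g)$, a Lefschetz trace formula for Deligne--Mumford stacks (in the spirit of Behrend's formula, applied to $I\mathcal{X}$) yields $\tr(\Frob_{\orb}^{n}\mid H^{*}_{\textrm{CR}})=\sum_{[g]}q^{\,n\,\age(g)}\,\#\big([\Fix(g)/\textup{C}(g)]\big)(\FF_{q^{n}})$, the \emph{stringy} (orbifold) point count of $\mathcal{X}$. Thus $Z_{H^{*}_{\textrm{CR}}}(\mathcal{X},t)$ is the exponential of the generating series of these stringy counts, and the whole theorem reduces to the arithmetic identity $\#_{\textup{str}}\mathcal{X}(\FF_{q^{n}})=\#\tylda{X}(\FF_{q^{n}})$ for every $n$.

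The heart of the argument is this last identity, an arithmetic crepant-resolution (McKay) statement refining the cohomological comparison of Theorem \ref{TY3}. I would establish it by motivic integration in the style of Denef--Loeser: one computes $\int_{\tylda{X}}\mathbb{L}^{-\,\mathrm{ord}\,\mathcal{J}}$, where $\mathcal{J}$ encodes the discrepancy of the map $\tylda{X}\to X$, and transforms this integral across the crepant resolution. Crepancy, i.e.\ $K_{\tylda{X}}=\pi^{*}K_{X}$ (equivalently the vanishing of discrepancies encoded by $K_{Z/X}=K_{Z/X'}$ in the hypotheses of the earlier comparison theorem), makes the Jacobian factor trivial, so the integral simply computes $\#\tylda{X}$; pushing the same computation to the stack $\mathcal{X}$ organizes the contributions according to the components of $I\mathcal{X}$, and the order of vanishing along each sector is exactly $\age(g)$, reproducing the age-weighted sum above. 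Specializing the resulting identity in the Grothendieck ring of varieties to the point-counting measure $\#(\,\cdot\,)(\FF_{q^{n}})$ gives $\#_{\textup{str}}\mathcal{X}(\FF_{q^{n}})=\#\tylda{X}(\FF_{q^{n}})$.

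To upgrade the equality of point counts to the claimed equality of zeta functions I would invoke the Weil conjectures together with the hard Lefschetz hypothesis. Point-count equality for all $n$ already forces $Z_{H^{*}_{\textrm{CR}}}(\mathcal{X},t)=Z_{q}(\tylda{X},t)$ as formal power series via the exponential formula; the hard Lefschetz condition then guarantees that $H^{*}_{\textrm{CR}}$ carries a genuine pure weight structure with Poincar\'e duality, so that the Tate twists by $\age(g)$ assemble into the correct weights and the two characteristic polynomials of Frobenius match factor by factor, not merely after taking the alternating product.

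The main obstacle I anticipate is the arithmetic bookkeeping on the inertia stack. The twisted sectors indexed by $[g]\in\textrm{Conj}(G)$ need not be individually defined over $\FF_{q}$, the geometric Frobenius permutes them and acts on each $\Fix(g)$, and this must be reconciled with passage to $\textup{C}(g)$-invariants so that $\Frob_{\orb}$ is well defined and the trace formula above is valid over every $\FF_{q^{n}}$. Controlling these Galois actions --- ensuring the age-twist contributions are compatible with the permutation of sectors and with the change-of-variables Jacobian --- is precisely where the tameness of $\mathcal{X}$, the triviality of the generic stabilizer, and the hard Lefschetz condition are used.
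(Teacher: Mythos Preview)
The paper does not prove this statement. It is quoted from \cite{Rose} (Corollary~6.4 there) as a black-box input; all the present paper does with it is record the global-quotient special case as Theorem~\ref{Rosss} and then apply that case to the Kummer threefolds $\kum_{3}(E,G)$. There is therefore no proof in the paper to compare your proposal against.

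That said, your outline is a reasonable sketch of the circle of ideas behind results of this type: reducing the equality of zeta functions to an equality of stringy point counts, and establishing the latter by motivic (or $p$-adic) integration with the age appearing as the order of the Jacobian along each twisted sector, is the standard route through Batyrev, Denef--Loeser, and Yasuda. One correction to your last paragraph: once the exponential formula gives $Z_{H^{*}_{\textrm{CR}}}(\mathcal{X},t)=Z_{q}(\tylda{X},t)$ as formal power series from the point-count identity, the theorem as stated is already proved. The hard Lefschetz hypothesis is not there to upgrade a power-series equality to a factor-by-factor matching; it enters earlier in Rose's framework to ensure that the Chen--Ruan cohomology satisfies Poincar\'e duality and that the orbifold Frobenius and the age Tate-twists are well-defined and compatible.
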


As a special case we have:

\begin{Thm} \label{Rosss}
	Let $G$ be a finite group acting on an algebraic smooth variety $X$. If there exists a crepant resolution $\tylda{X/G}$ of variety $X/G,$ then the following equality holds $$Z_{q}(\tylda{X/G},t)=Z_{H^{*}_{\textrm{CR}}}(X/G, t).$$
\end{Thm}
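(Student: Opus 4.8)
The plan is to deduce Theorem \ref{Rosss} from the previously stated result of Rose (\cite{Rose}, Cor. 6.4) by exhibiting the quotient $X/G$ as the coarse moduli scheme of a suitable Deligne--Mumford stack and checking that all hypotheses of that corollary are met. First I would take $\mathcal{X} := [X/G]$, the quotient stack of the smooth projective variety $X$ by the finite group $G$. Since $G$ is finite and $X$ is smooth, $[X/G]$ is a smooth, proper Deligne--Mumford stack over the base field, and its coarse moduli space is precisely the quotient variety $X/G$; this is standard (Keel--Mori). Properness of $\mathcal{X}$ follows from properness of $X$, and smoothness of $\mathcal{X}$ from smoothness of $X$.

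Next I would address the arithmetic/characteristic hypotheses. Rose's theorem requires $\mathcal{X}$ to be \emph{tame}; working over $\FF_q$ with $\gcd(q,|G|)=1$ (equivalently, assuming the prime $p$ does not divide $|G|$, which we are free to impose since we only want the statement at all but finitely many primes, or at a prime of good reduction coprime to $|G|$), the stabilizers all have order prime to $p$, so $[X/G]$ is tame. The \emph{trivial generic stabilizer} condition is the requirement that $G$ act generically freely on $X$; in our intended application $X$ is (a model over $\FF_q$ of) $E^3$ and $G \subset \SL_3(\ZZ)$ acts through an effective linear action, so the generic stabilizer is trivial. The remaining hypothesis is the \emph{hard Lefschetz condition} for $\mathcal{X}$; by Rose's setup this is the statement that the Chen--Ruan cohomology of $\mathcal{X}$ satisfies hard Lefschetz, which holds because a crepant resolution $\tylda{X/G}$ exists and is itself a smooth projective variety satisfying hard Lefschetz, and the Chen--Ruan cohomology of $\mathcal{X}$ is isomorphic (as a graded vector space with the appropriate Hodge/weight structure) to the ordinary cohomology of $\tylda{X/G}$ by Theorem \ref{TY3} together with \cite{Rose}, Cor. 6.4 applied over $\CC$. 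Alternatively one simply invokes that the existence of a crepant resolution is, in this context, exactly the geometric input that guarantees the hypothesis.

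With $\mathcal{X} = [X/G]$ satisfying all the hypotheses of \cite{Rose}, Cor. 6.4, that corollary gives $Z_{H^{*}_{\textrm{CR}}}(\mathcal{X}, t) = Z_q(\tylda{X}, t)$ where $\tylda{X} \to X/G$ is the given crepant resolution and $Z_q$ is the classical Hasse--Weil zeta function. Finally I would note that $Z_{H^{*}_{\textrm{CR}}}([X/G], t)$ is by definition what we denote $Z_{H^{*}_{\textrm{CR}}}(X/G, t)$ — the orbifold zeta function attached to the quotient — so rewriting $\tylda{X}$ as $\tylda{X/G}$ yields exactly $Z_q(\tylda{X/G}, t) = Z_{H^{*}_{\textrm{CR}}}(X/G, t)$, which is the assertion.

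The main obstacle is bookkeeping rather than depth: one must make sure the hypotheses of \cite{Rose}, Cor. 6.4 are genuinely satisfied in the generality claimed — in particular the tameness and trivial-generic-stabilizer conditions, which silently restrict the statement to primes $p \nmid |G|$ and to effective group actions, and the hard Lefschetz condition, which is where the existence of a crepant resolution is really used. Once these are in place, Theorem \ref{Rosss} is a direct specialization, entirely parallel to the way Theorem \ref{TY3} specializes the Hodge-theoretic statement of \cite{TY}, Cor. 3.16.
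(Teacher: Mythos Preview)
Your proposal is correct and follows the same route as the paper: the paper presents Theorem~\ref{Rosss} with the single sentence ``As a special case we have'' immediately after Rose's Corollary~6.4, without spelling out any further argument, and your proof is exactly the verification that $[X/G]$ satisfies the hypotheses of that corollary. One small point: your justification of the hard Lefschetz condition by invoking ``\cite{Rose}, Cor.~6.4 applied over $\CC$'' is slightly garbled, since that corollary is a statement about zeta functions over $\FF_q$; the clean way is to use only Theorem~\ref{TY3} (or the underlying result of Yasuda) to identify $H^{*}_{\mathrm{CR}}([X/G])$ with $H^{*}(\tylda{X/G})$ as graded vector spaces, and then import hard Lefschetz from the smooth projective crepant resolution.
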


\section{Quotients of \texorpdfstring{$E^{3}$}{Lg} by a finite subgroup of \texorpdfstring{$\textup{SL}_{3}(\ZZ)$}{Lg}} 

Let $E$ be an elliptic curve, and let $G$ be a finite subgroup of $\textup{SL}_{3}(\ZZ)$. The action of $G$ on $E^{3}$ is regarded as the action of $G$ on the endomorphism ring of $E^{3}.$ The quotient $E^3/G$ admits a crepant resolution of singularities in the sense of Andreatta-Wi{\'s}niewski (\cite{Wisa}) denoted by $\kum_{3}(E,G)$. If $H^{1,0}(E^3)^G=0,$ then $\kum_{3}(E,G)$ is a Calabi-Yau threefold. The Calabi-Yau manifold $\kum_{3}(E,G)$ is defined over any subfield of $\CC$ containing the $j$-invariant of $E$. Note that all finite subgroups $\textup{SL}_{3}(\ZZ)$ were classified in \cite{tahara} as well as in \cite{Donten} by using different idea. 

Denote by $\tau, \eta$ a $2$-torsion and $3$-torsion point of $E$, respectively. 

\subsection{Hodge numbers}
 
In this section we compute Hodge numbers of the variety~$\kum_{3}(E,G_{24.1}),$ where $$G_{24.1}\colon\;\;\left\langle \left(\begin{array}{rrr}0 & 0 & 1\\0 & 1 & 0\\ -1 & 0 & 0 \end{array}\right), \left(\begin{array}{rrr}-1 & 0 & 0\\0 & 0 & -1\\ 0 & -1 & 0 \end{array}\right) \right\rangle\simeq S_{4}$$
is a finite subgroup of $\textup{SL}_{3}(\ZZ)$ of order 24 denoted by \hyperlink{24.1}{$G_{24.1}$}. In the table below w collect all necessary data needed for computation of the Hodge numbers by using Chen-Ruan cohomology \ref{Formula} i.e the fixed loci and the age of representative classes of conjugacy classes of $G_{24.1}$:

$$\textup{Conj}(G_{24.1})=\{[g_0], [g_1], [g_2], [g_3], [g_4]\}$$

\begin{table}[H]
	\begin{center}
		\begin{varwidth}{\textheight}
			\resizebox{0.6\textheight}{!}{
				\begin{tabular}{l||l}
					$g_0=\left(\begin{array}{rrr}1 & 0 & 0 \\ 0 & 1 & 0 \\ 0 & 0 & 1 \end{array}\right)$ &
					\begin{tabular}{@{}l@{}} \setlength{\fboxrule}{1pt}\fcolorbox{white}{white}{$C(g_0)=G_{24.1}$} \\  \setlength{\fboxrule}{1pt}\fcolorbox{white}{white}{$\Fix(g_0)=E^3$} \\ \setlength{\fboxrule}{1pt}\fcolorbox{white}{white}{$\age(g_0)=0$} \\
				    \end{tabular} \\
					\hhline{=::=:}
					$g_1=\left(\begin{array}{rrr}-1 & 0 & 0\\0 & 0 & -1\\ 0 & -1 & 0 \end{array}\right)$ &  
					\begin{tabular}{@{}l@{}}
						\setlength{\fboxrule}{1pt}\fcolorbox{white}{white}{$C(g_1)=\left\langle 
						g_1,\;\left(\begin{array}{rrr}1 & 0 & 0 \\ 0 & -1 & 0 \\ 0 & 0 & -1 \end{array}\right)\right\rangle \simeq \ZZ_2\oplus \ZZ_2$} \\  \setlength{\fboxrule}{1pt}\fcolorbox{white}{white}{$\displaystyle\Fix(g_{1})=\bigcup_{\tau\in E[2]}\{(\tau, -x, x)\}_{x\in E}\simeq \bigcup_{\tau\in E[2]}E - \textup{four copies of $E$}$} \\ \setlength{\fboxrule}{1pt}\fcolorbox{white}{white}{$\age(g_1)=1$} \\
					\end{tabular} \\
					\hhline{=::=:}
					$g_2=\left(\begin{array}{rrr}0 & -1 & 0\\0 & 0 & -1\\ 1 & 0 & 0 \end{array}\right)$ &
					\begin{tabular}{@{}l@{}} 
					\setlength{\fboxrule}{1pt}\fcolorbox{white}{white}{$C(g_2)=\left\langle g_2
						\right\rangle \simeq \ZZ_3 $} \\  \setlength{\fboxrule}{1pt}\fcolorbox{white}{white}{$\displaystyle\Fix(g_2)=\{(x, -x, x)\}_{x\in E}\simeq E - \textup{one copy of $E$}$} \\ \setlength{\fboxrule}{1pt}\fcolorbox{white}{white}{$\age(g_2)=1$} \\
					\end{tabular}\\
					\hhline{=::=:}
					$g_3=\left(\begin{array}{rrr}0 & -1 & 0\\1 & 0 & 0\\ 0 & 0 & 1 \end{array}\right)$ &
					\begin{tabular}{@{}l@{}} 
			        \setlength{\fboxrule}{1pt}\fcolorbox{white}{white}{$C(g_3)=\left\langle g_3
						 \right\rangle \simeq \ZZ_4 $} \\  \setlength{\fboxrule}{1pt}\fcolorbox{white}{white}{$\displaystyle \Fix(g_3)=\{(\tau, \tau, x)\}_{\tau\in E[2],\; x\in E}\simeq \bigcup_{\tau\in E[2]}E - \textup{four copies of $E$}$} \\ \setlength{\fboxrule}{1pt}\fcolorbox{white}{white}{$\age(g_3)=1$} \\
					\end{tabular}\\
					\hhline{=::=:}
					$g_4=\left(\begin{array}{rrr}-1 & 0 & 0\\0 & -1 & 0\\ 0 & 0 & 1 \end{array}\right)$ &
					\begin{tabular}{@{}l@{}}
					 \setlength{\fboxrule}{1pt}\fcolorbox{white}{white}{$C(g_4)=\left\langle g_4,\;
						\left(\begin{array}{rrr}-1 & 0 & 0 \\ 0 & 1 & 0 \\ 0 & 0 & -1 \end{array}\right), \left(\begin{array}{rrr}0 & 1 & 0 \\ -1 & 0 & 0 \\ 0 & 0 & 1 \end{array}\right) \right\rangle \simeq D_8 $} \\  \setlength{\fboxrule}{1pt}\fcolorbox{white}{white}{$\displaystyle\Fix(g_4)=\bigcup_{\tau_1\in E[2]}\bigcup_{\tau_2\in E[2]}\{(\tau_1, \tau_2, x)\}_{x\in E}\simeq \bigcup_{\tau_1\in E[2]}\bigcup_{\tau_2\in E[2]} E - \textup{sixteen copies of $E$}$} \\ \setlength{\fboxrule}{1pt}\fcolorbox{white}{white}{$\age(g_4)=1$} \\ 
					\end{tabular}\\
			\end{tabular}}	
		\end{varwidth}
		\vspace{2mm}
	\end{center}
\end{table}

\textbf{The fixed locus of the action of $g_0=\id.$} As $\Fix(g_0)=E^3$ we have to compute $H^{1,0}(E^3)^{G_{24.1}},$ $H^{1,1}(E^3)^{G_{24.1}},$ $H^{1,2}(E^3)^{G_{24.1}}.$ Denote by $\dd z_{i},$ $\dd\bar{z_{i}}$ pullbacks of invariant form and its complex conjugation by the projection $E^{3}\to E$ on $i$-th component. Then $\{\dd z_{1}, \dd z_{2}, \dd z_{3}\}$ is a basis of the vector space $H^{1,0}(E^3).$ The action of generators of $G_{24.1}$ on $H^{1,0}(E^3)$ is given by matrices 
$$M:=\left(\begin{array}{rrr}0 & 0 & 1\\0 & 1 & 0\\ -1 & 0 & 0 \end{array}\right) \quad \textup{and} \quad N:=\left(\begin{array}{rrr}-1 & 0 & 0\\0 & 0 & -1\\ 0 & -1 & 0 \end{array}\right).$$
Consequently the vector space $H^{1,0}(E^3)^{G_{24.1}}$ is given by the kernel of the $6\times 3$ matrix
\[
\left(\begin{array}{@{}c}
	M-\mathds{1}_{3}\\
\hline
	N-\mathds{1}_{3}
	
\end{array}\right).
\]
In this case 
$$H^{1,0}(E^3)^{G_{24.1}}=\ker \left(
 \begin{array}{rrr}-1 & 0 & 1\\0 & 0 & 0\\ -1 & 0 & -1 \\ -1 & 0 & -1\\0 & 0 & -1\\ 0 & -1 & 0 \end{array}
	\right)=(0).$$
	
Similarly the vector space $H^{1,1}(E^3)^{G_{24.1}}$ has basis $\dd z_{i}\wedge \dd \bar{z_{j}}$, hence the space  $H^{1,1}(E^3)^{G_{24.1}}$ is the kernel of the $18\times 9$ matrix 
\[
\left(\begin{array}{@{}c}
	M\otimes \bar{M}-\mathds{1}_{9}
	\\
	\hline
	N\otimes \bar{N}-\mathds{1}_{9}
	
\end{array}\right), 
\]
where $\otimes$ denotes the Kronecker product of matrices. In our case  $H^{1,1}(E^3)^{G_{24.1}}\simeq \CC.$

Finally $\dd z_{i}\wedge \dd \bar{z_{j}}\wedge \dd \bar{z_{k}}$ forms a basis of $H^{1,2}(E^3),$ hence $H^{1,2}(E^3)^{G_{24.1}}$ is given by the kernel of the matrix 
\[
\left(\begin{array}{@{}c}
	M\otimes \bigwedge^{2}\bar{M}-\mathds{1}_{9}
	\\
	\hline
	N\otimes \bigwedge^{2}\bar{N}-\mathds{1}_{9}
	
\end{array}\right). 
\]
Consequently $H^{1,2}(E^3)^{G_{24.1}}\simeq \CC.$

\textbf{The fixed locus of the action of $g_1.$} Since $$\displaystyle\Fix(g_{1})=\bigcup_{\tau\in E[2]}\{(\tau, -x, x)\}_{x\in E}\simeq \bigcup_{i\in \{1,2,3,4\}}E^{(i)} - \textup{four copies of $E$}$$ and 
$C(g_1)=\left\langle 
g_1, g\right\rangle $ where $$g:=\left(\begin{array}{rrr}1 & 0 & 0 \\ 0 & -1 & 0 \\ 0 & 0 & -1 \end{array}\right)$$ it follows that 
\begin{itemize}
\item $g(E^{(i)})=E^{(i)},$
\item $g$ acts on $H^{0,0}(\Fix(g_{1}))$ as $\diag(1,1,1,1)$, 
\item $g$ acts on $H^{1,0}(\Fix(g_{1}))$ as $\diag(-1,-1,-1,-1).$ 
\end{itemize}
Consequently 
$$H^{0,0}(\Fix(g_{1}))^{C(g_1)}\simeq \CC^{4}\quad \textup{and}\quad H^{1,0}(\Fix(g_{1}))^{C(g_1)}\simeq (0).$$
One can also see that 
$$\Fix(g_{1})/C(g_1)\simeq \textup{four copies of $\PP^{1}$}.$$

\textbf{The fixed locus of the action of $g_2.$} In that case 
$$ \Fix(g_2)=\{(x, -x, x)\}_{x\in E}\simeq E\quad \textup{and} \quad C(g_2)=\left\langle g_2\right\rangle.$$ Therefore $\Fix(g_{2})/C(g_2)\simeq E,$ so $$H^{0,0}(\Fix(g_{2}))^{C(g_2)}\simeq \CC^{}\quad \textup{and}\quad H^{1,0}(\Fix(g_{2}))^{C(g_2)}\simeq \CC^{}.$$

\textbf{The fixed locus of the action of $g_3.$} In that case 
$$ \Fix(g_3)=\{(\tau, \tau, x)\}_{\tau\in E[2],\; x\in E}\simeq \bigcup_{i\in \{1,2,3,4\}}E^{(i)} - \textup{four copies of $E$}$$ and $C(g_3)=\left\langle g_3\right\rangle$. Therefore $$H^{0,0}(\Fix(g_{3}))^{C(g_3)}\simeq  \CC^{4}\quad \textup{and}\quad H^{1,0}(\Fix(g_{3}))^{C(g_3)}\simeq \CC^{4}.$$

\textbf{The fixed locus of the action of $g_4.$} In that case 
$$\Fix(g_4)=\bigcup_{\tau_1\in E[2]}\bigcup_{\tau_2\in E[2]}\{(\tau_1, \tau_2, x)\}_{x\in E}\simeq \bigcup_{i,j\in \{1,2,3,4\}} E^{(i,j)} - \textup{sixteen copies of $E$}.$$
The action of $$C(g_4)=\left\langle g_4,\;
						\left(\begin{array}{rrr}-1 & 0 & 0 \\ 0 & 1 & 0 \\ 0 & 0 & -1 \end{array}\right), \left(\begin{array}{rrr}0 & 1 & 0 \\ -1 & 0 & 0 \\ 0 & 0 & 1 \end{array}\right) \right\rangle \simeq D_8 $$ on $H^{0,0}(\Fix(g_4))$ has the following matrix \newcommand\blockmat{%
		\begin{array}{@{\,}rr@{\,} }
			 0 & 1^{\mathstrut} \\ 1 & 0 \\ 
	\end{array}}
	$$(\mathds{1}_{})^{4}\oplus \left( \blockmat \right)^{6}$$
while the action of $C(g_4)$ on $H^{1,0}(\Fix(g_4))$ has the matrix $-\mathds{1}_{16}$. Therefore $$H^{0,0}(\Fix(g_{4}))^{C(g_3)}\simeq  \CC^{10}\quad \textup{and}\quad H^{1,0}(\Fix(g_{4}))^{C(g_3)}\simeq (0).$$

In total, the Calabi-Yau manifold $X$ has the following Hodge numbers:
	$$h^{1,1}(\kum_{3}(E,G_{24.1}))=1+4+1+4+10=20\quad \textup{and}\quad h^{2,1}(\kum_{3}(E,G_{24.1}))=1+0+1+4+0=6, $$ hence the Poincar{\'e} polynomial of $\kum_{3}(E,G_{24.1})$ equals $$t^6+20t^4+14t^3+20t^2+1, $$ which agrees with the result of \cite{Donten}. 
	
\subsection{Zeta function} 

Let $q$ be a power of a prime $p\neq 2$ such that the curve $E$ admits a good reduction $E_q$ over $\FF_q$. In the case of groups of order 6, 12, 24 we also assume that $p\neq 3$. These conditions imply that the Calabi-Yau 3-fold $X=\kum_{3}(E_q,G)$ has a good reduction $X_{q}$ over $\FF_q$.

The zeta function of $E_q$ is equal to $$\frac{1-a_qT+qT^2}{(1-T)(1-qT)},$$ where  $$a_q=q+1-\# E_q(\FF_q)\quad \textup{and}\quad |a_q|\leq 2\sqrt{q}.$$ Moreover trinomial $1-a_qT+qT^2$ has two roots $\alpha_{q}$ and $\bar{\alpha_{q}}$ which are algebraic integers such that $|\alpha_q|=|\bar{\alpha_q}|=q^{\frac{3}{2}}.$

By \cite{Rose} the orbifold zeta function is given by the product of zeta functions associated to fixed loci of actions of elements in $G_{24.1}.$ 
The orbifold zeta function associated to $H^{*}(E_q^{3})^{G_{24.1}}$ is a factor of zeta function of $E_q^3:$

$$\frac{\left(1-a_qT +qT^{2}\right)^{3}\left(1-(a_q^{3}-3 a_q q)T+q^3 T^2\right)\left(1-a_q qT +q^3T^{2}\right)^{9}\left(1-a_qq^2T +q^5T^{2}\right)^{3}}{\left(1-T \right)\left(1-qT\right)^{9} \left(1-(a_q^2-2q)T+ q^{2}T^{2}\right)^{3} \left(1-(a_q^2q-2q^2)T+ q^{4}T^{2}\right)^{3}\left(1-q^{2}T\right)^{9}\left(1-q^{3}T\right)}.$$


\textbf{The fixed locus of the action of $g_0=\id.$} The contribution to the zeta function coming from $H^{0}(X_q)$ and $H^{6}(X_q)$ obviously equals $$\frac{1}{1-T}\quad \textup{and}\quad \frac{1}{1-q^3T},$$ respectively. 

By the orbifold formula (cf. Thm. \ref{TY3}) the Kummer 3-fold $\kum_{3}(E,G)$ is Calabi-Yau exactly for $16$ non-cyclic groups from \cite{tahara} (see Table 2 in \cite{Donten} for a complete list of subgroups and corresponding Poincar{\'e} polynomials). 

\begin{Lem} Let $E$ be an elliptic curve with a good reduction $E_{q}$ over $\FF_q$. Then for any group $G$ (such that $\kum_{3}(E,G)$ is a Calabi-Yau 3-fold) the group $H^2(E_q^3)^{G}$ is generated by divisors defined over $\FF_q.$

Consequently, the orbifold Frobenius polynomials for $H^{2}(X_q)$ and $H^{4}(X_q)$ equal $$\frac{1}{(1-qT)^{\dim H^{1,1}(E^3)^G}}\quad \textup{and}\quad \frac{1}{(1-q^2T)^{\dim H^{1,1}(E^3)^G}},$$ respectively. \end{Lem}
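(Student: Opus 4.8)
The plan is to show that the invariant part $H^2(E_q^3)^G$ of the second cohomology is spanned entirely by algebraic cycles (divisor classes) that are rational over $\FF_q$, which forces the Frobenius to act on this space by multiplication by $q$ (and, via Poincar\'e duality on $H^4$, by $q^2$).

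First I would recall the structure of $H^2(E^3)$. Writing $H^1(E) = \langle \dd z, \dd\bar z\rangle$ with the Tate twist in mind, the K\"unneth decomposition gives $H^2(E^3) = \bigoplus_{i<j} H^1(E_i)\otimes H^1(E_j) \;\oplus\; \bigoplus_i H^2(E_i)$. The pieces $H^2(E_i)$ are each generated by the class of a point, hence by a divisor defined over $\FF_q$ (e.g. the origin), and they are manifestly $G$-invariant and Frobenius acts as $q$. The interesting part is the $(1,1)$-component $H^{1,1}(E^3)^G$, and I would argue that inside $\bigoplus_{i<j} H^1(E_i)\otimes H^1(E_j)$, the $G$-invariant classes of Hodge type $(1,1)$ are all divisor classes: on $E_i\times E_j$ the N\'eron--Severi group is generated by the two fibre classes and the graphs of isogenies $E_i\to E_j$ (for $E$ without CM just the graphs of $\pm$ multiplication maps and the two fibres; the CM case adds more graphs but they are still algebraic). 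Since $G\subset \SL_3(\ZZ)$ acts on $E^3$ through the \emph{integral} matrix action on the three factors, every element $g\in G$ sends such a graph-of-integral-map divisor on $E_i\times E_j$ to another divisor of the same kind on $E_{g(i)}\times E_{g(j)}$, all defined over the same field as $E$. Therefore the $G$-average of any such divisor is a $G$-invariant divisor class over $\FF_q$, and these averages span $H^{1,1}(E^3)^G$ because the full Hodge $(1,1)$ part of $H^2(E^3)$ is already spanned by these graph-and-fibre divisors (as $E^3$ is an abelian variety, $\NS(E^3)\otimes\QQ = H^{1,1}(E^3)\cap H^2(E^3,\QQ)$, and the Hodge conjecture is known for abelian varieties that are powers of elliptic curves).

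The key point making this work for \emph{every} admissible $G$ is the Calabi--Yau hypothesis $H^{1,0}(E^3)^G = 0$, combined with the fact that $G$ acts on $H^1(E^3) = H^{1,0}\oplus H^{0,1}$ through a rational (indeed integral) representation: this means the eigenvalue $1$ does not appear in the action of any $g\in G$ on $H^{1,0}(E^3)$, hence the diagonal part $H^{1,0}(E_i)\otimes H^{0,1}(E_j) \subset H^{1,1}(E^3)$ — the only part of $H^{1,1}(E^3)$ that is \emph{not} obviously algebraic over $\FF_q$ in the naive sense — contributes invariants only through combinations that descend from $\NS$. I would phrase this by noting that the $G$-representation on $H^2(E_q^3,\QQ_\ell)$ is defined over $\QQ$ and commutes with Frobenius, so $H^2(E_q^3,\QQ_\ell)^G$ is a Frobenius-stable subspace; since it is purely of weight $2$ and, by the Hodge-theoretic computation over $\CC$, purely of type $(1,1)$, and since it is spanned by cycle classes rational over $\FF_q$ by the argument above, Frobenius acts on it as $q$. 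The statement about $H^4(X_q)$ then follows from $H^4 = H^2$ under Poincar\'e duality (or directly, since $H^4(E^3)^G$ is likewise generated by products of the same divisors), giving Frobenius eigenvalue $q^2$ with the same multiplicity $\dim H^{1,1}(E^3)^G$.

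The main obstacle is verifying uniformly, across the $16$ non-cyclic groups, that $H^{1,1}(E^3)^G$ is spanned by $\FF_q$-rational divisor classes rather than merely by $\QQ$-rational Hodge classes — i.e.\ controlling the field of definition of the spanning cycles. This is where the integrality of the $\SL_3(\ZZ)$-action is essential: every cycle in the spanning set (fibres $\{pt\}\times E$, diagonals, anti-diagonals, and graphs of $\pm 1$ maps between factors) is defined over the prime field, or at worst over $\FF_q$ once $E$ has good reduction there, and the $G$-action permutes this finite collection. A secondary technical point is the CM case (relevant when $E$ has CM and $p$ splits or is inert), where $\NS(E^3)$ is larger; but the extra graph divisors coming from CM endomorphisms are still defined over the field of definition of those endomorphisms, which is contained in $\FF_q$ under our running hypotheses on good reduction, so the argument is unaffected. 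Once the spanning set is in place the conclusion about the Frobenius polynomials is immediate from the trace formula, requiring no further computation.
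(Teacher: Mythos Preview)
Your strategy is genuinely different from the paper's and, once repaired, more conceptual: the paper's proof is a pure case--by--case check, listing for each of the sixteen groups an explicit basis of $H^{1,1}(E_q^3)^G$ consisting of divisors $\{a_1 Z_1+a_2 Z_2+a_3 Z_3=0\}$ with $a_i\in\ZZ$ (hence visibly $\FF_q$-rational), and verifying the count matches $\dim H^{1,1}(E^3)^G$. No Hodge-theoretic argument is given.

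That said, your write-up has two real gaps.

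\textbf{Missing step: $H^{2,0}(E^3)^G=0$.} You assert that $H^2(E^3)^G$ is ``purely of type $(1,1)$'' but never prove it, and your attempted justification is confused: the Calabi--Yau hypothesis $H^{1,0}(E^3)^G=0$ does \emph{not} say the eigenvalue $1$ is absent for every $g$, and the paragraph about $H^{1,0}(E_i)\otimes H^{0,1}(E_j)$ addresses pieces that are already of type $(1,1)$ rather than the dangerous $H^{2,0}$. The clean argument is: with $V=H^{1,0}(E^3)$ the standard $3$-dimensional representation of $G\subset\SL_3(\ZZ)$, one has $H^{2,0}=\bigwedge^2 V\cong V^*$ (since $\det V$ is trivial), and for a finite group acting over $\QQ$ the trivial representation occurs in $V^*$ with the same multiplicity as in $V$; hence $V^G=0\Rightarrow H^{2,0}(E^3)^G=0$.

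\textbf{Incorrect spanning claim.} For non-CM $E$ the Picard number of $E^3$ is $6$, not $9=h^{1,1}$, so ``the full Hodge $(1,1)$ part of $H^2(E^3)$ is already spanned by graph-and-fibre divisors'' is false. What saves you is the previous point: once $H^{2,0}(E^3)^G=0$, the rational sub-Hodge structure $H^2(E^3,\QQ)^G$ is pure $(1,1)$ and hence, by Lefschetz $(1,1)$, lies in $\NS(E^3)_\QQ$. Better still, the decomposition $H^{1,1}\cong V\otimes \bar V\cong V\otimes V\cong S^2V\oplus\bigwedge^2 V$ (using that $V$ is real) together with $(\bigwedge^2 V)^G=0$ identifies $H^{1,1}(E^3)^G$ with $(S^2V)^G$, which is exactly the $G$-invariant span of the ``integral'' divisors $\{a_1 Z_1+a_2 Z_2+a_3 Z_3=0\}$. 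These are defined over the prime field, so your CM digression becomes unnecessary.

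With these two fixes your uniform argument goes through and is arguably preferable to the paper's enumeration; as written, however, the proof is incomplete.
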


\begin{proof} The vector space $H^{1,1}(E_q^3)^{G}$ is generated by the classes of the following invariant divisors:

\begin{minipage}{\linewidth}
      \begin{minipage}{0.45\linewidth}      
\begin{itemize}
    \item[\hyperlink{24.1}{$G_{24.1}\colon$}] 
	$\{(Z_1, Z_2, Z_3)\in E_q^{3}\colon Z_{1}=0\}$, \\
	$\{(Z_1, Z_2, Z_3)\in E_q^{3}\colon Z_{2}=0\}$,\\
	$\{(Z_1, Z_2, Z_3)\in E_q^{3}\colon Z_{3}=0\}$ \\
	
	\item[\hyperlink{24.2}{$G_{24.2}\colon$}]
	$\{(Z_1, Z_2, Z_3)\in E_q^{3}\colon Z_1+Z_2+Z_3=0\}$, \\
	$\{(Z_1, Z_2, Z_3)\in E_q^{3}\colon Z_1+Z_2=0\}$,\\
	$\{(Z_1, Z_2, Z_3)\in E_q^{3}\colon Z_1+Z_3=0\}$, \\
	$\{(Z_1, Z_2, Z_3)\in E_q^{3}\colon Z_{1}=0\}$, \\
	$\{(Z_1, Z_2, Z_3)\in E_q^{3}\colon Z_{2}=0\}$,\\
	$\{(Z_1, Z_2, Z_3)\in E_q^{3}\colon Z_{3}=0\}$\\
	
	\item[\hyperlink{24.3}{$G_{24.3}\colon$}] 
	$\{(Z_1, Z_2, Z_3)\in E_q^{3}\colon Z_1+Z_2+Z_3=0\}$,\\
	$\{(Z_1, Z_2, Z_3)\in E_q^{3}\colon Z_1+Z_2=0\}$,\\
	$\{(Z_1, Z_2, Z_3)\in E_q^{3}\colon Z_1+Z_3=0\}$, \\
	$\{(Z_1, Z_2, Z_3)\in E_q^{3}\colon Z_{1}=0\}$, \\
	$\{(Z_1, Z_2, Z_3)\in E_q^{3}\colon Z_{2}=0\}$,\\
	$\{(Z_1, Z_2, Z_3)\in E_q^{3}\colon Z_{3}=0\}$\\
	
	\item[\hyperlink{12.1}{$G_{12.1}\colon$}]
	$\{(Z_1, Z_2, Z_3)\in E_q^{3}\colon Z_2+Z_3=0\}$, \\
	$\{(Z_1, Z_2, Z_3)\in E_q^{3}\colon Z_2=0\}$,\\
	$\{(Z_1, Z_2, Z_3)\in E_q^{3}\colon Z_3=0\}$, \\
	$\{(Z_1, Z_2, Z_3)\in E_q^{3}\colon Z_1=0\}$\\
    
    \item[\hyperlink{12.2}{$G_{12.2}\colon$}]
	$\{(Z_1, Z_2, Z_3)\in E_q^{3}\colon Z_{1}=0\}$, \\
	$\{(Z_1, Z_2, Z_3)\in E_q^{3}\colon Z_{2}=0\}$,\\
	$\{(Z_1, Z_2, Z_3)\in E_q^{3}\colon Z_{3}=0\}$\\
	
	\item[\hyperlink{12.3}{$G_{12.3}\colon$}]
	$\{(Z_1, Z_2, Z_3)\in E_q^{3}\colon Z_1-Z_2=0\}$, \\
	$\{(Z_1, Z_2, Z_3)\in E_q^{3}\colon Z_2-Z_3=0\}$,\\
	$\{(Z_1, Z_2, Z_3)\in E_q^{3}\colon Z_3-Z_1=0\}$, \\
	$\{(Z_1, Z_2, Z_3)\in E_q^{3}\colon Z_{1}=0\}$, \\
	$\{(Z_1, Z_2, Z_3)\in E_q^{3}\colon Z_{2}=0\}$,\\
	$\{(Z_1, Z_2, Z_3)\in E_q^{3}\colon Z_{3}=0\}$\\
	
	\item[\hyperlink{12.4}{$G_{12.4}\colon$}]
	$\{(Z_1, Z_2, Z_3)\in E_q^{3}\colon Z_1+Z_2=0\}$, \\
	$\{(Z_1, Z_2, Z_3)\in E_q^{3}\colon Z_1+Z_3=0\}$,\\
	$\{(Z_1, Z_2, Z_3)\in E_q^{3}\colon Z_2+Z_3=0\}$, \\
	$\{(Z_1, Z_2, Z_3)\in E_q^{3}\colon Z_1+Z_2+Z_3=0\}$,\\
	$\{(Z_1, Z_2, Z_3)\in E_q^{3}\colon Z_{1}=0\}$, \\
	$\{(Z_1, Z_2, Z_3)\in E_q^{3}\colon Z_{2}=0\}$,\\
	$\{(Z_1, Z_2, Z_3)\in E_q^{3}\colon Z_{3}=0\}$\\
\end{itemize}  
 \end{minipage}   
 \hspace{11mm}
 \begin{minipage}{0.45\linewidth}
\begin{itemize}
    \item[\hyperlink{8.1}{$G_{8.1}\colon$}]
	$\{(Z_1, Z_2, Z_3)\in E_q^{3}\colon Z_{1}=0\}$, \\
	$\{(Z_1, Z_2, Z_3)\in E_q^{3}\colon Z_{2}=0\}$,\\
	$\{(Z_1, Z_2, Z_3)\in E_q^{3}\colon Z_{3}=0\}$\\
	
	\item[\hyperlink{8.2}{$G_{8.2}\colon$}]
	$\{(Z_1, Z_2, Z_3)\in E_q^{3}\colon Z_1+Z_2+Z_3=0\}$, \\
	$\{(Z_1, Z_2, Z_3)\in E_q^{3}\colon Z_1+Z_2=0\}$,\\
	$\{(Z_1, Z_2, Z_3)\in E_q^{3}\colon Z_1+Z_3=0\}$, \\
	$\{(Z_1, Z_2, Z_3)\in E_q^{3}\colon Z_{1}=0\}$, \\
	$\{(Z_1, Z_2, Z_3)\in E_q^{3}\colon Z_{2}=0\}$,\\
	$\{(Z_1, Z_2, Z_3)\in E_q^{3}\colon Z_{3}=0\}$\\

	\item[\hyperlink{6.1}{$G_{6.1}\colon$}]
	$\{(Z_1, Z_2, Z_3)\in E_q^{3}\colon Z_3-Z_2=0\}$, \\
	$\{(Z_1, Z_2, Z_3)\in E_q^{3}\colon Z_{1}=0\}$, \\
	$\{(Z_1, Z_2, Z_3)\in E_q^{3}\colon Z_{2}=0\}$,\\
	$\{(Z_1, Z_2, Z_3)\in E_q^{3}\colon Z_{3}=0\}$\\
	
	\item[\hyperlink{6.2}{$G_{6.2}\colon$}] 
	$\{(Z_1, Z_2, Z_3)\in E_q^{3}\colon Z_3-Z_2=0\}$, \\
	$\{(Z_1, Z_2, Z_3)\in E_q^{3}\colon Z_{1}=0\}$, \\
	$\{(Z_1, Z_2, Z_3)\in E_q^{3}\colon Z_{2}=0\}$,\\
	$\{(Z_1, Z_2, Z_3)\in E_q^{3}\colon Z_{3}=0\}$\\
	
	\item[\hyperlink{6.3}{$G_{6.3}\colon$}] 
	$\{(Z_1, Z_2, Z_3)\in E_q^{3}\colon Z_1+Z_2+Z_3=0\}$\\ 
		
	\item[\hyperlink{4.1}{$G_{4.1}\colon$}] 
	$\{(Z_1, Z_2, Z_3)\in E_q^{3}\colon Z_{1}=0\}$, \\
	$\{(Z_1, Z_2, Z_3)\in E_q^{3}\colon Z_{2}=0\}$,\\
	$\{(Z_1, Z_2, Z_3)\in E_q^{3}\colon Z_{3}=0\}$\\
	
	\item[\hyperlink{4.2}{$G_{4.2}\colon$}] 
	$\{(Z_1, Z_2, Z_3)\in E_q^{3}\colon Z_2+Z_3=0\}$, \\
	$\{(Z_1, Z_2, Z_3)\in E_q^{3}\colon Z_{1}=0\}$, \\
	$\{(Z_1, Z_2, Z_3)\in E_q^{3}\colon Z_{2}=0\}$,\\
	$\{(Z_1, Z_2, Z_3)\in E_q^{3}\colon Z_{3}=0\}$\\
	
	\item[\hyperlink{4.3}{$G_{4.3}\colon$}]
	$\{(Z_1, Z_2, Z_3)\in E_q^{3}\colon Z_{1}=0\}$, \\
	$\{(Z_1, Z_2, Z_3)\in E_q^{3}\colon Z_{2}=0\}$,\\
	$\{(Z_1, Z_2, Z_3)\in E_q^{3}\colon Z_{3}=0\}$\\
	
	\item[\hyperlink{4.4}{$G_{4.4}\colon$}]
	$\{(Z_1, Z_2, Z_3)\in E_q^{3}\colon Z_1-Z_2=0\}$, \\
	$\{(Z_1, Z_2, Z_3)\in E_q^{3}\colon Z_1+Z_3=0\}$,\\
	$\{(Z_1, Z_2, Z_3)\in E_q^{3}\colon Z_2-Z_3=0\}$, \\
	$\{(Z_1, Z_2, Z_3)\in E_q^{3}\colon Z_{2}=0\}$,\\
	$\{(Z_1, Z_2, Z_3)\in E_q^{3}\colon Z_{3}=0\}$\\
\end{itemize}           
     \end{minipage}
  \end{minipage}

\end{proof}

\begin{Lem} The orbifold Frobenius polynomial for $H_{\et}^{3}(E^3_q, \QQ_{\ell})^{G}$ equals 
$$\left(1-(a_q^{3}-3 a_q q)T+q^3 T^2\right)\left(1-a_q qT +q^3T^{2}\right)^{\dim H^{2,1}(E^3, \CC)^G}.$$\end{Lem}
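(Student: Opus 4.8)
The plan is to compute the Galois representation on $H^3_{\et}(E_q^3,\QQ_\ell)^G$ explicitly, using the Künneth decomposition together with the fact that the $G$-action is induced from the linear $\SL_3(\ZZ)$-action on the three factors. First I would recall that $H^1_{\et}(E_q,\QQ_\ell)$ is a $2$-dimensional Galois representation whose Frobenius characteristic polynomial is $1-a_qT+qT^2$, with eigenvalues $\alpha_q,\bar\alpha_q$ satisfying $\alpha_q\bar\alpha_q=q$ (note the displayed claim $|\alpha_q|=q^{3/2}$ in the excerpt is a typo for $q^{1/2}$). By the Künneth formula,
\[
H^3_{\et}(E_q^3,\QQ_\ell)=\bigoplus_{i+j+k=3} H^i(E_q)\otimes H^j(E_q)\otimes H^k(E_q),
\]
and the summands with all of $i,j,k$ odd form the piece $H^1\otimes H^1\otimes H^1$ (dimension $8$), while the remaining summands are of the form $H^1\otimes H^0\otimes H^2$ and permutations (each isomorphic to $H^1(E_q)(-1)$ as a Galois module, i.e. Frobenius eigenvalues $\alpha_q q,\bar\alpha_q q$). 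So I would split $H^3_{\et}(E_q^3)^G$ as $\bigl(H^1{}^{\otimes 3}\bigr)^G\oplus\bigl(\bigoplus_{\text{perm}} H^1\otimes H^0\otimes H^2\bigr)^G$ and analyze the two pieces separately.

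For the second piece: the $3$ permutations of $(1,0,2)$-type and the $3$ of $(2,0,1)$-type are permuted/combined by $G$ exactly as the coordinate hyperplane classes $\{Z_i=0\}$ and their duals are, so its $G$-invariants have dimension equal to $\dim H^{1,1}(E^3)^G$ — which by the previous lemma is spanned by divisors, matching the count $\dim H^{2,1}(E^3,\CC)^G$ only if... here I need to be careful: actually the invariants of $\bigoplus_{\sigma} H^1(E_q)(-1)$ under $G$ should be identified via Hodge theory with $H^{2,1}(E^3)^G$ (the $(2,1)$ and $(1,2)$ parts of these Künneth summands), each appearing with a Tate twist, giving the factor $(1-a_qqT+q^3T^2)^{\dim H^{2,1}(E^3,\CC)^G}$. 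The key point is that the $G$-action on the relevant $6$-dimensional space of Künneth summands $\{e_i\otimes 1\otimes \omega_j : \text{permutations}\}$ is through the permutation-type representation on coordinates, and its invariants carry the Galois module $H^1(E_q)(-1)$ with multiplicity $\dim H^{2,1}(E^3)^G$; I would verify the multiplicity by noting $H^{2,1}(E^3)=\bigoplus H^{1,1}(\text{2 factors})\otimes H^{1,0}(\text{1 factor})$ and that $G$ acts on this in the same way, so taking invariants commutes with the Galois structure.

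For the first piece $\bigl(H^1(E_q)^{\otimes 3}\bigr)^G$: here $G\subset\SL_3(\ZZ)$ acts on $V:=H^1(E_q)\cong W\otimes\QQ_\ell^3$ where $W$ is the standard $2$-dimensional Galois module and $\QQ_\ell^3$ carries the standard $G$-representation $\rho_{\mathrm{std}}$ permuting the three elliptic-curve factors — wait, more precisely $G$ acts on $H^1(E_q^3)\cong W\oplus W\oplus W$ via $\id_W\otimes\rho_{\mathrm{std}}$, so on $H^3 \supset (H^1)^{\otimes 3}$ it acts via $\id_{W^{\otimes 3}}\otimes \rho_{\mathrm{std}}^{\otimes 3}$ composed with the permutation of tensor factors coming from... no: $G$ acts \emph{diagonally}, i.e. $g\in G$ sends the Künneth component indexed by coordinate triple to another via the linear map $g$ on $\QQ^3$ applied in each slot, so $(H^1(E_q)^{\otimes 3})^G=\bigl(W^{\otimes 3}\otimes(\QQ_\ell^3)^{\otimes 3}\bigr)^G=W^{\otimes 3}\otimes\bigl((\QQ_\ell^3)^{\otimes 3}\bigr)^G$ since Galois acts trivially on $\QQ_\ell^3$. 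Now $\bigl((\QQ^3)^{\otimes 3}\bigr)^G$ is, by Hodge-theoretic comparison, exactly $H^{3,0}(E^3)^G\oplus H^{0,3}(E^3)^G\oplus(\text{parts of }H^{2,1},H^{1,2}\text{ coming from }H^{1,0}\otimes H^{1,0}\otimes H^{0,1}\text{-type terms})$. The hard part — and the main obstacle — is pinning down precisely which $1$-dimensional Galois subrepresentations of $W^{\otimes 3}$ occur and with what multiplicity inside the $G$-invariants; I expect that $\bigl((\QQ^3)^{\otimes 3}\bigr)^G$ is $1$-dimensional (spanned by the determinant/alternating tensor $e_1\wedge e_2\wedge e_3$, which is $G$-invariant precisely because $G\subset\SL_3$), giving $\bigl(W^{\otimes 3}\otimes(\QQ^3)^{\otimes3}\bigr)^G\cong W^{\otimes3}\otimes\textstyle\bigwedge^3\QQ^3\cong\textstyle\bigwedge^3 W^{\oplus 3}$... no, rather $\cong\textstyle\bigwedge^3(W\otimes\QQ^3)$-type piece. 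Concretely the invariant $e_1\wedge e_2\wedge e_3$-part of $H^1(E_q^3)^{\otimes 3}$ is $\textstyle\bigwedge^3 H^1(E_q)$ summed appropriately — its Frobenius polynomial is governed by the Newton identity: if $\alpha,\bar\alpha$ are the Frobenius eigenvalues on $W$ then on $W^{\otimes 3}$ restricted to the sign-isotypic part for $S_3$ the eigenvalues are products $\alpha_{i_1}\alpha_{i_2}\alpha_{i_3}$ symmetrized, yielding $\alpha^3+\bar\alpha^3$ and $\alpha^2\bar\alpha+\alpha\bar\alpha^2=q(\alpha+\bar\alpha)=qa_q$; so the characteristic polynomial on this $2$-dimensional space is $1-(a_q^3-3a_qq)T+q^3T^2$, using $\alpha^3+\bar\alpha^3=(\alpha+\bar\alpha)^3-3\alpha\bar\alpha(\alpha+\bar\alpha)=a_q^3-3qa_q$. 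Thus I would conclude: the first piece contributes exactly the factor $1-(a_q^3-3a_qq)T+q^3T^2$, and combined with the second piece's $(1-a_qqT+q^3T^2)^{\dim H^{2,1}(E^3,\CC)^G}$ this gives the claimed orbifold Frobenius polynomial. Throughout, the one genuine subtlety to nail down carefully is the bookkeeping of which Künneth summands land in which Hodge piece and the compatibility of "take $G$-invariants" with "read off the Galois action," which follows because $G$ acts $\FF_q$-rationally and hence commutes with Frobenius.
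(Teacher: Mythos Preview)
Your overall strategy of decomposing $H^3_{\et}(E_q^3)$ and tracking Frobenius eigenvalues is sound, but the execution has a genuine gap: the K\"unneth decomposition by multi-degree $(i,j,k)$ is \emph{not} $G$-stable for a general $G\subset\SL_3(\ZZ)$. If $g$ is not a signed permutation matrix (as for $G_{24.2}$, $G_{24.3}$, $G_{8.2}$, \dots), then $g^*$ sends a class such as $dz_1\wedge dz_2\wedge d\bar z_3$, which lies in the $(1,1,1)$-piece, to a combination containing terms $dz_a\wedge d\bar z_a\wedge dz_b$ that live in $(2,1,0)$-type pieces. Hence your split of $(H^3)^G$ into ``the $(1,1,1)$ part'' and ``the mixed part'' is not well-defined, and the separate computations you attempt are not meaningful. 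A related confusion: you write $(H^1(E_q)^{\otimes 3})^G=(W^{\otimes3}\otimes(\QQ_\ell^3)^{\otimes3})^G$, which conflates the $8$-dimensional K\"unneth summand with the $216$-dimensional $(H^1(E^3))^{\otimes3}$; and the ``sign-isotypic part of $W^{\otimes3}$'' you invoke is $\bigwedge^3 W=0$ since $\dim W=2$.

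The fix --- and this is what the paper's proof is doing, albeit tersely --- is to use the Frobenius \emph{eigenspace} decomposition instead, which is always $G$-stable because $G$ is defined over $\FF_q$. In the ordinary case, pick eigenvectors $\omega,\eta$ for $\Frob_q$ on $H^1(E_q)$ with eigenvalues $\alpha_q,\bar\alpha_q$; then $H^3(E_q^3)=\bigwedge^3(V_\omega\oplus V_\eta)$ with $V_\omega\cong V_\eta\cong\QQ^3$ as $G$-modules. The $\alpha_q^3$- and $\bar\alpha_q^3$-eigenspaces are the one-dimensional lines $\bigwedge^3 V_\omega$ and $\bigwedge^3 V_\eta$, each $G$-invariant since $\det\rho=1$; the $q\alpha_q$- and $q\bar\alpha_q$-eigenspaces are each $\bigwedge^2 V\otimes V$ as $G$-representations, and this is exactly the $G$-module underlying $H^{2,1}(E^3)$ (because $G\subset\GL_3(\RR)$ acts by the same matrices on the $dz_i$ and the $d\bar z_i$). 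Taking invariants then yields the claimed factorisation. The paper's argument is phrased as a contradiction --- if only the eigenvalues $q\alpha_q,q\bar\alpha_q$ appeared, then $dz_1\wedge dz_2\wedge dz_3$ would have zero $\omega_1\wedge\omega_2\wedge\omega_3$-component, forcing $dz=0$ --- but the underlying mechanism is exactly this eigenspace picture.
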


\begin{proof} The lemma is obvious when $a_q=0,$ so we can consider only ordinary case. Let $\omega, \eta\in H_{\textup{DR}}^{1}(E_q)$ such that $$\Frob_q^{*}\omega =\alpha_q \omega\quad \textup{and} \quad \Frob_q^{*}\eta =\bar{\alpha_q} \eta.$$
Denote by $\diff z_{i}=\pi_{i}^{*} (\diff z),$ $\omega_i=\pi_{i}^{*} (\omega),$ $\eta_i=\pi_{i}^{*} (\eta)$ where $\pi_{i}\colon E^{3}\to E$ is projection on $i$-th coordinate $(i=1,2,3)$.
As $\omega$ and $\eta$ are linearly independent it follows $$\diff z=k \omega +\ell \eta\quad \textup{and}\quad \diff \bar{z}=m \omega +n \eta,$$ for some $k, \ell, m, n \in \CC$. Now 
$$\diff z_{1}\wedge \diff z_{2}\wedge \diff z_{3}=k^3 \omega_1\wedge \omega_2\wedge \omega_3 +\ell^3 \eta_1\wedge \eta_2\wedge \eta_3+k^{2}\ell \xi +k^{}\ell^{2}\psi, $$ where $$\Frob_{q}^{*}(\xi)=q\alpha_q\xi, \quad \quad \Frob_q^{*}(\omega_{1}\wedge \omega_{2}\wedge \omega_{3})=\alpha_{q}^3\omega_{1}\wedge \omega_{2}\wedge \omega_{3}$$ and
$$\quad \Frob_{q}^{*}(\psi)=q\bar{\alpha_q} \psi,\quad \quad\Frob_q^{*}(\eta_{1}\wedge \eta_{2}\wedge \eta_{3})=\bar{\alpha_q}^3\eta_{1}\wedge \eta_{2}\wedge \eta_{3}.$$ 
If all eigenvalues of $\Frob_q^{*}$ are equal $q\alpha_{q}$ and $q\bar{\alpha_{q}},$ then $k^{3}=\ell^{3}=0$ as we consider the case $a_q \neq 0$. Consequently $\diff z=0$ which is impossible in the ordinary case. 
\end{proof}
From the above lemmas we immediately get the following corollary
\begin{Cor}
The orbifold zeta function for $H_{\et}^{3}(E^3_q, \QQ_{\ell})^{G}$ equals
$$\frac{\left(1-(a_q^{3}-3 a_q q)T+q^3 T^2\right)\left(1-a_q qT +q^3T^{2}\right)^{\dim H^{2,1}(E^3, \CC)^G}}{(1-T)(1-qT)^{\dim H^{1,1}(E^3, \CC)^G}(1-q^2T)^{\dim H^{1,1}(E^3, \CC)^G}(1-q^3T)}.$$ \end{Cor}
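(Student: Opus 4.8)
The plan is to recognise the asserted rational function as the contribution of the untwisted sector $[g_{0}]=[\id]$ to the orbifold zeta function. By Rose's factorisation (\cite{Rose}, already used above to express the orbifold zeta function of $E^{3}/G$ as a product over $\textup{Conj}(G)$), the factor attached to the conjugacy class of the identity carries no age shift, so it equals the alternating product $\prod_{k=0}^{6} Z_{k}(T)^{(-1)^{k+1}}$, where $Z_{k}(T)=\det(1-\Frob_{q}T\mid H^{k}_{\et}(E^{3}_{q},\QQ_{\ell})^{G})$; here I use that the untwisted summand of the $\ell$-adic Chen-Ruan cohomology of $[E^{3}/G]$ is $H^{*}_{\et}(E^{3}_{q},\QQ_{\ell})^{G}$, placed in its usual cohomological degrees. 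So the task reduces to identifying each $Z_{k}$.

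First I would dispose of the easy degrees. In degrees $0$ and $6$ the groups $H^{0}_{\et}(E^{3}_{q},\QQ_{\ell})$ and $H^{6}_{\et}(E^{3}_{q},\QQ_{\ell})$ are one-dimensional with trivial $G$-action, $\Frob_{q}$ acting by $1$ and $q^{3}$ respectively, so $Z_{0}(T)=1-T$ and $Z_{6}(T)=1-q^{3}T$. In degrees $1$ and $5$ the invariants vanish: since $\kum_{3}(E,G)$ is a Calabi-Yau threefold we have $H^{1,0}(E^{3})^{G}=0$, whence $H^{1}_{\et}(E^{3}_{q},\QQ_{\ell})^{G}=0$ and, by Poincar\'{e} duality, $H^{5}_{\et}(E^{3}_{q},\QQ_{\ell})^{G}=0$; hence $Z_{1}(T)=Z_{5}(T)=1$. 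For degrees $2$ and $4$ I would quote the divisor lemma above, which yields $Z_{2}(T)=(1-qT)^{\dim H^{1,1}(E^{3})^{G}}$ and $Z_{4}(T)=(1-q^{2}T)^{\dim H^{1,1}(E^{3})^{G}}$. For degree $3$ I would quote the preceding lemma, which yields $Z_{3}(T)=(1-(a_{q}^{3}-3a_{q}q)T+q^{3}T^{2})(1-a_{q}qT+q^{3}T^{2})^{\dim H^{2,1}(E^{3})^{G}}$.

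Substituting these into $\prod_{k}Z_{k}(T)^{(-1)^{k+1}}=Z_{3}(T)/(Z_{0}(T)Z_{2}(T)Z_{4}(T)Z_{6}(T))$ produces exactly the rational function in the statement. I do not expect a real obstacle: all the substantive work is already packaged in the two lemmas, and the only points needing attention are the sign convention that puts odd-degree cohomology in the numerator, and the use of the Calabi-Yau hypothesis to kill $H^{1}$ and $H^{5}$ of the untwisted sector, so that $H^{*}_{\et}(E^{3}_{q},\QQ_{\ell})^{G}$ has the cohomological shape of a Calabi-Yau threefold rather than that of $E^{3}$ itself.
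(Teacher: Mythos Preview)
Your proposal is correct and follows exactly the approach the paper takes: the paper's entire proof is the sentence ``From the above lemmas we immediately get the following corollary,'' and you have simply made explicit what that sentence packages --- the degree-by-degree identification of the untwisted-sector contribution, invoking the two preceding lemmas for $H^{2},H^{4}$ and $H^{3}$, and using the Calabi-Yau condition $H^{1,0}(E^{3})^{G}=0$ (established earlier in the paper) to kill $Z_{1}$ and $Z_{5}$.
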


For our main case \hyperlink{24.1}{(24.1)} we have computed  $\dim H^{1,1}(E^3, \CC)^{G_{24.1}}=\dim H^{2,1}(E^3, \CC)^{G_{24.1}}=1,$ so the orbifold zeta function $H_{\et}^{3}(E^3_q, \QQ_{\ell})^{G}$ equals 

\begin{equation} \label{r0} \frac{\left(1-(a_q^{3}-3 a_q q)T+q^3 T^2\right)\left(1-a_q qT +q^3T^{2}\right)^{\dim H^{2,1}(E^3, \CC)^G}}{(1-T)(1-qT)^{\dim H^{1,1}(E^3, \CC)^G}(1-q^2T)^{\dim H^{1,1}(E^3, \CC)^G}(1-q^3T)}.\end{equation}

\textbf{The fixed locus of the action of $g_1=\left(\begin{array}{rrr}-1 & 0 & 0\\0 & 0 & -1\\ 0 & -1 & 0 \end{array}\right)$}

Since $\Fix(g_{1})/C(g_1)\simeq \textup{four copies of $\PP^{1}$}$ indexed by 2-torsion points $E[2]$ the zeta function of $H^{*}(\Fix(g_{1})/C(g_1))$
depends on the behaviour of torsion subgroups $E[2]$ under the reduction modulo $q$. Let $E_q[2]=\{a,b,c,d\}$ and denote by $P_{(a_{1}, a_{2}, \ldots, a_{k})}$ the characteristic polynomial of the following matrix:

$$\underbrace{\begin{pmatrix}
			0 & 0 & 0 &\ldots & 1\\
			1 & 0 & 0 & \ldots & 0\\
			0 & 1 & 0 & \ldots & 0\\
			\vdots & \vdots & \vdots &  \vdots & \vdots\\
			0 &0 & \ldots & 1 & 0 \\
		\end{pmatrix}}_{a_{1}}\oplus \underbrace{\begin{pmatrix}
			0 & 0 & 0 &\ldots & 1\\
			1 & 0 & 0 & \ldots & 0\\
			0 & 1 & 0 & \ldots & 0\\
			\vdots & \vdots & \vdots &  \vdots & \vdots\\
			0 &0 & \ldots & 1 & 0 \\
		\end{pmatrix}}_{a_{2}} \oplus \cdots \oplus \underbrace{\begin{pmatrix}
			0 & 0 & 0 &\ldots & 1\\
			1 & 0 & 0 & \ldots & 0\\
			0 & 1 & 0 & \ldots & 0\\
			\vdots & \vdots & \vdots &  \vdots & \vdots\\
			0 &0 & \ldots & 1 & 0 \\
		\end{pmatrix}}_{a_{k}} $$
		
\vspace{4mm}
 
We have the following three cases:
\begin{itemize}
	\item[\hypertarget{H4}{$(1,1,1,1)$}] All points $a,$ $b,$ $c,$ $d$ are defined over $\FF_q$. Then the zeta function is equal to $$P_{(1,1,1,1)}(T)=(1-T)^{4}.$$ 
	\item[\hypertarget{H2}{$(1,1,2)$}] Points $a,$ $b,$ $c+d$ are defined over $\FF_q$ but $c,$ $d$ are not. The action of the Frobenius has the following linearization $$
	{
		\begin{pmatrix}
			1 & 0 & 0 & 0\\
			0 & 1 & 0 & 0\\
			0 & 0 & 0 & 1\\
			0 & 0 & 1 & 0\\
		\end{pmatrix}
	}
	$$
	and therefore the zeta function equals  $$P_{(1,1,2)}(T)=(1-T)^{3}(1+T).$$
	\item[\hypertarget{H1}{$(1,3)$}] $a$ is defined over $\FF_{q}$ and $b,$ $ c,$ $d$ not. The action of the Frobenius has the following linearization $$
	{
		\begin{pmatrix}
			1 & 0 & 0 & 0\\
			0 & 0 & 0 & 1\\
			0 & 1 & 0 & 0\\
			0 & 0 & 1 & 0\\
		\end{pmatrix}
	}
	$$
	and therefore the zeta function equals $$P_{(1,3)}(T)=(1-T)^{2}(1+T+T^2).$$
\end{itemize} 

From section \ref{zeta} it follows that in the case $(\lambda)$ we have the following formula
\begin{equation}\label{r1} Z_{q}\left(H^{*}(\Fix(g_{1})/C(g_1))\right)=Z_{q}(H^{*}(\PP^1))(qT)\otimes P_{(\lambda)}(T)=\frac{1}{\left((1-qT)(1-q^2T)\right)\otimes P_{(\lambda)}(T)},\end{equation} for $(\lambda)\in\{(1,1,1,1), (1,1,2), (1,3)\}.$

\textbf{The fixed locus of the action of $g_2=\left(\begin{array}{rrr}0 & -1 & 0\\0 & 0 & -1\\ 1 & 0 & 0 \end{array}\right)$}

Since $\Fix(g_{2})/C(g_2)\simeq E$ it follows that 
\begin{equation} \label{r2} Z_{q}\left(H^{*}(\Fix(g_{2})/C(g_2))\right)=Z_{q}(H^{*}(E))(qT)=\frac{1-a_qqT+q^3T^2}{(1-qT)(1-q^2T)}.\end{equation} 

\textbf{The fixed locus of the action of $g_3=\left(\begin{array}{rrr}0 & -1 & 0\\1 & 0 & 0\\ 0 & 0 & 1 \end{array}\right)$}
Since $C(g_3)$ is generated by $g_3$ then 
\begin{equation}\label{r3} Z_{q}\left(H^{*}(\Fix(g_{3})/C(g_3))\right)=Z_{q}(H^{*}(E))(qT)\otimes P_{(\lambda)}(T)=\frac{1-a_qqT+q^3T^2}{(1-qT)(1-q^2T)}\otimes P_{(\lambda)}(T).\end{equation}

\textbf{The fixed locus of the action of $g_4=\left(\begin{array}{rrr}-1 & 0 & 0\\0 & -1 & 0\\ 0 & 0 & 1 \end{array}\right)$}

In the case of $\hyperlink{H4}{(1,1,1,1)}$ the action has the matrix $\mathds{1}_{10}$ with characteristic polynomial equal to
$$P_{(1,1,1,1,1,1,1,1,1,1)}(T)=(1-T)^{10}.$$

For \hyperlink{H2}{(1,1,2)} the Frobenius morphism has the following linearization: $$(\mathds{1}_{})^{4}\oplus \left( \blockmat \right)^{3}$$
with characteristic polynomial equal to $$P_{(1,1,1,1,2,2,2)}(T)=(1-T)^{4}(1-T^2)^{3}.$$

Finally, for \hyperlink{H1}{(1,3)} the Frobenius morphism has the following linearization:

\newcommand\blockmatt{%
		\begin{array}{@{\,}rrr@{\,} }
			 0 & 1 & 0 \\ 0 & 0 & 1\\ 1 & 0 & 0
	\end{array}}
$$(\mathds{1}_{})^{4}\oplus \left( \blockmatt \right)^{2}$$	
with characteristic polynomial $$P_{(1,1,1,1,3,3)}(T)=(1-T)^6(1+T+T^2)^2.$$
	
Therefore in the case $(\eta)$ we get
\begin{equation}\label{r4} Z_{q}\left(H^{*}(\Fix(g_{4})/C(g_4))\right)=Z_{q}(H^{*}(\PP^1))(qT)\otimes P_{(\eta)}(T)=\frac{1}{\left((1-qT)(1-q^2T)\right)\otimes P_{(\eta)}(T)},\end{equation} for $\eta\in \{(1,1,1,1,1,1,1,1,1,1), (1,1,1,1,2,2,2), (1,1,1,1,3,3)\}.$ 

Multiplying rational functions (\ref{r0}), (\ref{r1}), (\ref{r2}), (\ref{r3}), (\ref{r4}) we obtain the following
 
\begin{Thm} The zeta function of $\kum_{3}(E, G_{24.1})$ equals 

\begin{table}[H]
	\begin{center}
		\begin{varwidth}{\textheight}
			\resizebox{0.55\textheight}{!}{
				\begin{tabular}{c||c}
				\textup{Case:}	 &
					\begin{tabular}{@{}l@{}} \setlength{\fboxrule}{1pt}\fcolorbox{white}{white}{$\displaystyle{Z_{q}\left(\kum_{3}(E, G_{24.1})\right)}$}
					\end{tabular} \\
					\hhline{=::=:}
					$(1,1,1,1)$ &  
					\begin{tabular}{@{}l@{}}
						\setlength{\fboxrule}{1pt}\fcolorbox{white}{white}{$\displaystyle{\frac{\left(1-(a_q^{3}-3 a_q q)T+q^3 T^2\right)\left(1-a_q qT +q^3T^{2}\right)^{6}}{(1-T)(1-qT)^{20}(1-q^2T)^{20}(1-q^3T)}}$} 
					\end{tabular} \\
					\hhline{=::=:}
					$(1,1,2)$ &
					\begin{tabular}{@{}l@{}} 
						\setlength{\fboxrule}{1pt}\fcolorbox{white}{white}{$\displaystyle{\frac{\left(1-(a_q^{3}-3 a_q q)T+q^3 T^2\right)\left(1-a_q qT +q^3T^{2}\right)^{5}\left(1+a_q qT +q^3T^{2}\right)^{}}{(1-T)(1-qT)^{12}(1+qT)^5(1-a_{q}qT+q^{3}T^2)^3(1+q^2T)^5(1-q^2T)^{12}(1-q^3T)}}$} 
					\end{tabular}\\
				   \hhline{=::=:}
				$(1,3)$ &
				\begin{tabular}{@{}l@{}} 
					\setlength{\fboxrule}{1pt}\fcolorbox{white}{white}{$\displaystyle{\frac{\left(1-(a_q^{3}-3 a_q q)T+q^3 T^2\right)\left(1-a_q qT +q^3T^{2}\right)^{4}\left(1+a_q qT -(q^3-a_q^2 q^2) T^2+a_q q^4 T^3 +q^6 T^4\right)^{}}{(1-T)(1-qT)^{12}(1+qT+q^{2}T^2)^4(1+q^2T+q^4T^2)^4(1-q^2T)^{12}(1-q^3T)}}$} 
				\end{tabular}\\
			\end{tabular}}	
		\end{varwidth}
		\vspace{2mm}
	\end{center}
\end{table}

\end{Thm}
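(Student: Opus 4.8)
The plan is to assemble the zeta function of $\kum_3(E,G_{24.1})$ by invoking Theorem \ref{Rosss}, which reduces the computation to the orbifold zeta function, and then to use the product formula from \cite{Rose} that expresses $Z_{H^*_{\textrm{CR}}}(E_q^3/G_{24.1},t)$ as a product of ``twisted sector'' zeta functions indexed by the five conjugacy classes $[g_0],\dots,[g_4]$. The contribution of each sector $[g_i]$ is the zeta function of the quotient $\Fix(g_i)/C(g_i)$, shifted by $\age(g_i)$ (which multiplies the Frobenius eigenvalues by $q^{\age(g_i)}$). Since each nontrivial $g_i$ has $\age(g_i)=1$, each of the four twisted sectors contributes a factor with all Frobenius weights shifted up by one; the untwisted sector $[g_0]$ contributes the $G_{24.1}$-invariant part of the zeta function of $E_q^3$ itself. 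All of these factors have already been computed in the excerpt: equation \eqref{r0} for the $g_0$-sector, \eqref{r1} for $g_1$, \eqref{r2} for $g_2$, \eqref{r3} for $g_3$, and \eqref{r4} for $g_4$, with the table data fixing the fixed loci, centralizers, ages, and the numbers $\dim H^{1,1}(E^3)^{G_{24.1}} = \dim H^{2,1}(E^3)^{G_{24.1}} = 1$.

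First I would fix the three cases for how the $2$-torsion $E_q[2]$ splits under $\Fr_q$: the full split case $(1,1,1,1)$, the partial case $(1,1,2)$, and the case $(1,3)$. These are exhaustive because the Galois action on the three nonzero $2$-torsion points factors through a subgroup of $S_3$, so the only possible cycle types of the permutation on $E_q[2]=\{0,a,b,c\}$ (fixing $0$) are $1+1+1+1$, $1+1+2$, and $1+3$. For each case, the excerpt has already recorded the relevant characteristic polynomials $P_{(\lambda)}$ on $H^0$ of the fixed loci of $g_1$ and $g_3$ (which are unions of copies of $E$ indexed by $E_q[2]$), and on $H^0$ of the fixed locus of $g_4$ (indexed by $E_q[2]\times E_q[2]$, whose $\Fr_q$-action is the symmetric square of the $E_q[2]$-permutation, giving the polynomials $P_{(1^{10})}$, $P_{(1^4,2^3)}$, $P_{(1^4,3^2)}$ respectively). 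The quotients $\Fix(g_i)/C(g_i)$ are: four copies of $\PP^1$ for $g_1$, one copy of $E$ for $g_2$, four copies of $E$ for $g_3$, and ten copies of $\PP^1$ (equivalently the $C(g_4)$-orbit decomposition of the sixteen $E$'s) for $g_4$.

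Then I would simply multiply the five rational functions \eqref{r0}--\eqref{r4} in each of the three cases and simplify. The bookkeeping is: the $\PP^1$ factors contribute $(1-qT)$ and $(1-q^2T)$ to denominators (after the $\age=1$ shift that turns $(1-T)(1-q^2T)$ into $(1-qT)(1-q^3T)$... wait — more carefully, $Z_q(\PP^1) = \frac{1}{(1-T)(1-q^2T)}$ and the $\age=1$ shift $t\mapsto qt$ gives $\frac{1}{(1-qT)(1-q^3T)}$; however the excerpt writes the shifted $\PP^1$ zeta as $\frac{1}{(1-qT)(1-q^2T)}$, so I will follow the paper's normalization and the ``$\otimes$'' notation, which denotes taking the polynomial whose roots are products of pairs of roots from the two factors). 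The elliptic factors contribute the trinomials $1-a_qq T+q^3T^2$ in numerators. Collecting the numerators: $g_0$ gives $(1-(a_q^3-3a_qq)T+q^3T^2)(1-a_qqT+q^3T^2)$, $g_2$ gives one more $(1-a_qqT+q^3T^2)$, $g_3$ gives $\bigl(1-a_qqT+q^3T^2\bigr)\otimes P_{(\lambda)}$, and $g_1,g_4$ contribute only denominators twisted against their $P_{(\lambda)}$'s. In the $(1,1,1,1)$ case everything is $(1-T)$'s and $(1-q^2T+T^2)$-free, and one reads off $20 = 1+4+1+4+10$ copies of $(1-qT)$ and of $(1-q^2T)$ in the denominator and $1+1+4 = 6$ copies of $(1-a_qqT+q^3T^2)$ plus the single $(1-(a_q^3-3a_qq)T+q^3T^2)$ in the numerator, exactly matching $h^{1,1}=20$ and $h^{2,1}=6$. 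In the other two cases the root-permutations introduce the cyclotomic factors $(1+qT)$, $(1+q^2T)$, $(1+qT+q^2T^2)$, $(1+q^2T+q^4T^2)$ and the twisted-elliptic factors $(1+a_qqT+q^3T^2)$ and the quartic $1+a_qqT-(q^3-a_q^2q^2)T^2+a_qq^4T^3+q^6T^4$ (the latter being $(1-a_qqT+q^3T^2)\otimes(1+T+T^2)$), and after cancelling the common factors one arrives at the three displayed formulas.

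The main obstacle is purely the combinatorial-algebraic simplification in the non-split cases: one must correctly carry out the ``$\otimes$'' operation (roots of one polynomial times roots of the other) between the elliptic trinomial $1-a_qqT+q^3T^2$ and the cyclotomic pieces $1+T$ and $1+T+T^2$ coming from $P_{(1,1,2)}$ and $P_{(1,3)}$, and likewise between the $\PP^1$-denominators and those same cyclotomic pieces, and then cancel the overlap between the $g_3$-numerator contributions and the various denominators. Concretely, $(1-a_qqT+q^3T^2)\otimes(1+T) = (1-a_qqT+q^3T^2)(1+a_qqT+q^3T^2)$ and $(1-a_qqT+q^3T^2)\otimes(1+T+T^2)$ is the degree-$4$ factor in the $(1,3)$ row; verifying these identities and tracking the exponents is routine but error-prone, so I would double-check the total degrees of numerator and denominator against $\sum_i \dim H^i(X_q)$, i.e.\ against the Poincaré polynomial $t^6+20t^4+14t^3+20t^2+1$, as a consistency check in every case. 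Once the degree bookkeeping and the two $\otimes$-identities are in hand, the three entries of the table follow by direct multiplication of \eqref{r0}--\eqref{r4}.
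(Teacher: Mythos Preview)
Your approach is exactly the paper's: the theorem is obtained by multiplying the five precomputed factors \eqref{r0}--\eqref{r4} in each of the three $E_q[2]$-splitting cases, and your description of the bookkeeping (including the $\otimes$-identities for the twisted elliptic factors) is on target. One small slip to fix: the zeta function of $\PP^1$ is $\frac{1}{(1-T)(1-qT)}$, not $\frac{1}{(1-T)(1-q^2T)}$, so the age-$1$ shift $T\mapsto qT$ gives precisely the $\frac{1}{(1-qT)(1-q^2T)}$ that the paper uses (your ``wait'' was unnecessary); also $(1-a_qqT+q^3T^2)\otimes(1+T)$ equals $1+a_qqT+q^3T^2$ alone, not the product of the two---the product appears only after tensoring with the full $P_{(1,1,2)}=(1-T)^3(1+T)$.
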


\section{Examples}
\label{results}

In the computation of zeta function we excluded powers of $2$ and $3$ because $2$ is a prime of bad reduction of all considered Kummer manifolds while $3$ is a prime of bad reduction of $\kum_{3}(E,G)$ where $G$ is a group of order $6,$ $12,$ $24.$

In this section we collect result of computations for all remaining groups. The Frobenius morphism acts on $E[2]$ and $E[3]$ by a permutation and the shape of the zeta function depends on the cycle type of this permutation. In particular, if all elements in $E[2]$ and $E[3]$ are $\FF_q$ rational then this action is trivial and hence the zeta function of $X_q$ is given by the default (simplest) formula corresponding to the first row in the table.
\subsection{Order 24}

\textup{     }
\\
\linia
\vspace{1mm}
\begin{center}$G_{24.2}\colon\;\; $$\left\langle\left(\begin{array}{rrr} 0 & -1 & 0 \\ 1 & 1 & 1 \\ -1 & 0 & 0 \end{array}\right), \left(\begin{array}{rrr} -1 & -1 & 0 \\ 0 & 1 & 0 \\ 0 & 0 & -1 \end{array}\right) \right\rangle\simeq S_{4}$\end{center} 
\vspace{1mm}
\linia
\begin{itemize}
\item[] \textbf{Hodge numbers: } $$h^{1,1}(\kum_{3}(E, G_{24.2}))=11\quad \textup{and} \quad h^{2,1}(\kum_{3}(E, G_{24.2}))=3$$
\item[] \textbf{Zeta function: } 

\begin{table}[H]
	\begin{center}
		\begin{varwidth}{\textheight}
			\resizebox{0.55\textheight}{!}{
				\begin{tabular}{c||c}
				\textrm{Case:}	 &
					\begin{tabular}{@{}l@{}} \setlength{\fboxrule}{1pt}\fcolorbox{white}{white}{$\displaystyle{Z_{q}\left(\kum_{3}(E, G_{24.2})\right)}$}
					\end{tabular} \\
					\hhline{=::=:}
					$(1,1,1,1)$ &  
					\begin{tabular}{@{}l@{}}
						\setlength{\fboxrule}{1pt}\fcolorbox{white}{white}{$\displaystyle{\frac{\left(1-(a_q^{3}-3 a_q q)T+q^3 T^2\right)\left(1-a_q qT +q^3T^{2}\right)^{3}}{(1-T)(1-qT)^{11}(1-q^2T)^{11}(1-q^3T)}}$} 
					\end{tabular} \\
					\hhline{=::=:}
					$(1,1,2)$ &
					\begin{tabular}{@{}l@{}} 
						\setlength{\fboxrule}{1pt}\fcolorbox{white}{white}{$\displaystyle{\frac{\left(1-(a_q^{3}-3 a_q q)T+q^3 T^2\right)\left(1-a_q qT +q^3T^{2}\right)^{3}}{(1-T)(1-qT)^{10}(1+qT)(1+q^2T)(1-q^2T)^{10}(1-q^3T)}}$} 
					\end{tabular}\\
				   \hhline{=::=:}
				$(1,3)$ &
				\begin{tabular}{@{}l@{}} 
					\setlength{\fboxrule}{1pt}\fcolorbox{white}{white}{$\displaystyle{\frac{\left(1-(a_q^{3}-3 a_q q)T+q^3 T^2\right)\left(1-a_q qT +q^3T^{2}\right)^{3}}{(1-T)(1-qT)^{7}(1+qT+q^{2}T^2)^2(1+q^2T+q^4T^2)^2(1-q^2T)^{7}(1-q^3T)}}$} 
				\end{tabular}\\
			\end{tabular}}	
		\end{varwidth}
		\vspace{2mm}
	\end{center}
\end{table}
\end{itemize}

\vspace{-4mm}
\textup{     }
\\
\linia
\vspace{1mm}
\begin{center}$G_{24.3}\colon\;\;\left\langle\left(\begin{array}{rrr} 1 & 1 & 0 \\ -2 & -1 & -1 \\ 0 & 0 & 1 \end{array}\right), \left(\begin{array}{rrr} -1 & -1 & -1 \\ 0 & 0 & 1 \\ 0 & 1 & 0 \end{array}\right) \right\rangle\simeq S_{4}$ \end{center} 
\vspace{1mm}
\linia
\begin{itemize}
\item[] \textbf{Hodge numbers: } $$h^{1,1}(\kum_{3}(E, G_{24.3}))=11\quad \textup{and} \quad h^{2,1}(\kum_{3}(E, G_{24.3}))=3$$ 
\item[] \textbf{Zeta function: } 

\begin{table}[H]
	\begin{center}
		\begin{varwidth}{\textheight}
			\resizebox{0.55\textheight}{!}{
				\begin{tabular}{c||c}
				\textrm{Case:}	 &
					\begin{tabular}{@{}l@{}} \setlength{\fboxrule}{1pt}\fcolorbox{white}{white}{$\displaystyle{Z_{q}\left(\kum_{3}(E, G_{24.3})\right)}$}
					\end{tabular} \\
					\hhline{=::=:}
					$(1,1,1,1)$ &  
					\begin{tabular}{@{}l@{}}
						\setlength{\fboxrule}{1pt}\fcolorbox{white}{white}{$\displaystyle{\frac{\left(1-(a_q^{3}-3 a_q q)T+q^3 T^2\right)\left(1-a_q qT +q^3T^{2}\right)^{3}}{(1-T)(1-qT)^{11}(1-q^2T)^{11}(1-q^3T)}}$} 
					\end{tabular} \\
					\hhline{=::=:}
					$(1,1,2)$ &
					\begin{tabular}{@{}l@{}} 
						\setlength{\fboxrule}{1pt}\fcolorbox{white}{white}{$\displaystyle{\frac{\left(1-(a_q^{3}-3 a_q q)T+q^3 T^2\right)\left(1-a_q qT +q^3T^{2}\right)^{3}}{(1-T)(1-qT)^{10}(1+qT)(1+q^2T)(1-q^2T)^{10}(1-q^3T)}}$} 
					\end{tabular}\\
				   \hhline{=::=:}
				$(1,3)$ &
				\begin{tabular}{@{}l@{}} 
					\setlength{\fboxrule}{1pt}\fcolorbox{white}{white}{$\displaystyle{\frac{\left(1-(a_q^{3}-3 a_q q)T+q^3 T^2\right)\left(1-a_q qT +q^3T^{2}\right)^{3}}{(1-T)(1-qT)^{7}(1+qT+q^{2}T^2)^2(1+q^2T+q^4T^2)^2(1-q^2T)^{7}(1-q^3T)}}$} 
				\end{tabular}\\
			\end{tabular}}	
		\end{varwidth}
		\vspace{2mm}
	\end{center}
\end{table}
\end{itemize}
\newpage
\subsection{Order 12}
\textup{     }
\\
\linia
\vspace{1mm}
\begin{center}$G_{12.1}\colon\;\; \left\langle\left(\begin{array}{rrr} 1 & 0 & 0 \\ 0 & 0 & -1 \\ 0 & 1 & 1 \end{array}\right), \left(\begin{array}{rrr} -1 & 0 & 0 \\ 0 & 0 & 1 \\ 0 & 1 & 0 \end{array}\right) \right\rangle\simeq D_{6}$ \end{center} 
\vspace{1mm}
\linia
\begin{itemize}
\item[] \textbf{Hodge numbers: } $$h^{1,1}(\kum_{3}(E, G_{12.1}))=21\quad \textup{and} \quad h^{2,1}(\kum_{3}(E, G_{12.1}))=9$$
\item[] \textbf{Zeta function: } 
We get 15 possibilities for the zeta function of $\kum_{3}(E, G_{12.1})$:

$$\frac{\left(1-(a_q^{3}-3 a_q q)T+q^3 T^2\right)\left(1-a_q qT +q^3T^{2}\right)^{4}}{(1-T)(1-qT)^{4}\left(((1-qT)(1-q^2T))\otimes P_{(\eta)}\right)^{3}(1-q^2T)^{4}(1-q^3T)}\cdot \left(\left(\frac{1-a_q qT +q^3T^{2}}{(1-qT)(1-q^2T)}\right)\otimes P_{(\lambda)}\right),$$ where
$$((\lambda), (\eta))\in \left\{(1,1,1,1,1), (1,1,1,2), (1,2,2), (1,1,3), (1,4)\right\}\times \{(1,1,1,1), (1,1,2), (1,3)\}$$
\end{itemize}

\begin{table}[H]
	\begin{center}
		\begin{varwidth}{\textheight}
			\resizebox{0.62\textheight}{!}{
				\begin{tabular}{c||c}
				\begin{tabular}{@{}l@{}} \setlength{\fboxrule}{1pt}\fcolorbox{white}{white}{\textrm{Case:}}
					\end{tabular}	 &
					\begin{tabular}{@{}l@{}} \setlength{\fboxrule}{2pt}\fcolorbox{white}{white}{$\displaystyle{Z_{q}\left(\kum_{3}(E, G_{12.1})\right)}$}
					\end{tabular} \\
					\hhline{=::=:}
					\begin{tabular}{c@{}c@{}} \setlength{\fboxrule}{2pt}\fcolorbox{white}{white}{$(1,1,1,1,1)$}\\
					    \setlength{\fboxrule}{2pt}\fcolorbox{white}{white}{$(1,1,1,1)$}
					\end{tabular} &  
					\begin{tabular}{@{}l@{}}
						\setlength{\fboxrule}{2pt}\fcolorbox{white}{white}{$\displaystyle{\frac{\left(1-(a_q^{3}-3 a_q q)T+q^3 T^2\right)\left(1-a_q qT +q^3T^{2}\right)^{9}}{(1-T)(1-qT)^{21}(1-q^2T)^{21}(1-q^3T)}}$} 
					\end{tabular} \\
					\hhline{=::=:}
					\begin{tabular}{c@{}c@{}} \setlength{\fboxrule}{2pt}\fcolorbox{white}{white}{$(1,1,1,1,1)$}\\
					    \setlength{\fboxrule}{2pt}\fcolorbox{white}{white}{$(1,1,2)$}
					\end{tabular} &
					\begin{tabular}{@{}l@{}} 
						\setlength{\fboxrule}{2pt}\fcolorbox{white}{white}{$\displaystyle{\frac{\left(1-(a_q^{3}-3 a_q q)T+q^3 T^2\right)\left(1-a_q qT +q^3T^{2}\right)^{9}}{(1-T)(1-qT)^{18}(1+qT)^3(1+q^2T)^3(1-q^2T)^{18}(1-q^3T)}}$} 
					\end{tabular}\\
				   \hhline{=::=:}
				\begin{tabular}{c@{}c@{}} \setlength{\fboxrule}{2pt}\fcolorbox{white}{white}{$(1,1,1,1,1)$}\\
					    \setlength{\fboxrule}{2pt}\fcolorbox{white}{white}{$(1,3)$}
					\end{tabular} &
				\begin{tabular}{@{}l@{}} 
					\setlength{\fboxrule}{2pt}\fcolorbox{white}{white}{$\displaystyle{\frac{\left(1-(a_q^{3}-3 a_q q)T+q^3 T^2\right)\left(1-a_q qT +q^3T^{2}\right)^{9}}{(1-T)(1-qT)^{15}(1+qT+q^{2}T^2)^3(1+q^2T+q^4T^2)^3(1-q^2T)^{15}(1-q^3T)}}$} 
				\end{tabular}\\
				\hhline{=::=:}
				\begin{tabular}{c@{}c@{}} \setlength{\fboxrule}{2pt}\fcolorbox{white}{white}{$(1,1,1,2)$}\\
					    \setlength{\fboxrule}{2pt}\fcolorbox{white}{white}{$(1,1,1,1)$}
					\end{tabular} &  
					\begin{tabular}{@{}l@{}}
						\setlength{\fboxrule}{2pt}\fcolorbox{white}{white}{$\displaystyle{\frac{\left(1-(a_q^{3}-3 a_q q)T+q^3 T^2\right)\left(1+a_q qT +q^3T^{2}\right)^{5}\left(1-a_q qT +q^3T^{2}\right)^{4}}{(1-T)(1-qT)^{20}(1+qT)^{}(1+q^2T)^{}(1-q^2T)^{20}(1-q^3T)}}$} 
					\end{tabular} \\
					\hhline{=::=:}
					\begin{tabular}{c@{}c@{}} \setlength{\fboxrule}{2pt}\fcolorbox{white}{white}{$(1,1,1,2)$}\\
					    \setlength{\fboxrule}{2pt}\fcolorbox{white}{white}{$(1,1,2)$}
					\end{tabular} &
					\begin{tabular}{@{}l@{}} 
						\setlength{\fboxrule}{2pt}\fcolorbox{white}{white}{$\displaystyle{\frac{\left(1-(a_q^{3}-3 a_q q)T+q^3 T^2\right)\left(1+a_q qT +q^3T^{2}\right)^{5}\left(1-a_q qT +q^3T^{2}\right)^{4}}{(1-T)(1-qT)^{17}(1+qT)^{4}(1+q^2T)^{4}(1-q^2T)^{17}(1-q^3T)}}$} 
					\end{tabular}\\
				   \hhline{=::=:}
				\begin{tabular}{c@{}c@{}} \setlength{\fboxrule}{2pt}\fcolorbox{white}{white}{$(1,1,1,2)$}\\
					    \setlength{\fboxrule}{2pt}\fcolorbox{white}{white}{$(1,3)$}
					\end{tabular} &
				\begin{tabular}{@{}l@{}} 
					\setlength{\fboxrule}{2pt}\fcolorbox{white}{white}{$\displaystyle{\frac{\left(1-(a_q^{3}-3 a_q q)T+q^3 T^2\right)\left(1+a_q qT +q^3T^{2}\right)^{5}\left(1-a_q qT +q^3T^{2}\right)^{4}}{(1-T)(1-qT)^{14}(1+qT)^{}(1+qT+q^{2}T^2)^3(1+q^2T+q^4T^2)^3(1+q^2T)^{}(1-q^2T)^{14}(1-q^3T)}}$} 
				\end{tabular}\\
				\hhline{=::=:}
				\begin{tabular}{c@{}c@{}} \setlength{\fboxrule}{2pt}\fcolorbox{white}{white}{$(1,2,2)$}\\
					    \setlength{\fboxrule}{2pt}\fcolorbox{white}{white}{$(1,1,1,1)$}
					\end{tabular} &  
					\begin{tabular}{@{}l@{}}
						\setlength{\fboxrule}{2pt}\fcolorbox{white}{white}{$\displaystyle{\frac{\left(1-(a_q^{3}-3 a_q q)T+q^3 T^2\right)\left(1+a_q qT +q^3T^{2}\right)^{7}\left(1-a_q qT +q^3T^{2}\right)^{2}}{(1-T)(1-qT)^{19}(1+qT)^{2}(1+q^2T)^{2}(1-q^2T)^{19}(1-q^3T)}}$} 
					\end{tabular} \\


				\hhline{=::=:}
					\begin{tabular}{c@{}c@{}} \setlength{\fboxrule}{2pt}\fcolorbox{white}{white}{$(1,2,2)$}\\
					    \setlength{\fboxrule}{2pt}\fcolorbox{white}{white}{$(1,1,2)$}
					\end{tabular} &
					\begin{tabular}{@{}l@{}} 
						\setlength{\fboxrule}{2pt}\fcolorbox{white}{white}{$\displaystyle{\frac{\left(1-(a_q^{3}-3 a_q q)T+q^3 T^2\right)\left(1+a_q qT +q^3T^{2}\right)^{7}\left(1-a_q qT +q^3T^{2}\right)^{2}}{(1-T)(1-qT)^{16}(1+qT)^{5}(1+q^2T)^{5}(1-q^2T)^{16}(1-q^3T)}}$} 
					\end{tabular}\\
				   \hhline{=::=:}
				\begin{tabular}{c@{}c@{}} \setlength{\fboxrule}{2pt}\fcolorbox{white}{white}{$(1,2,2)$}\\
					    \setlength{\fboxrule}{2pt}\fcolorbox{white}{white}{$(1,3)$}
					\end{tabular} &
				\begin{tabular}{@{}l@{}} 
					\setlength{\fboxrule}{2pt}\fcolorbox{white}{white}{$\displaystyle{\frac{\left(1-(a_q^{3}-3 a_q q)T+q^3 T^2\right)\left(1+a_q qT +q^3T^{2}\right)^{7}\left(1-a_q qT +q^3T^{2}\right)^{2}}{(1-T)(1-qT)^{13}(1+qT)^{2}(1+qT+q^{2}T^2)^3(1+q^2T+q^4T^2)^3(1+q^2T)^{2}(1-q^2T)^{13}(1-q^3T)}}$} 
				\end{tabular}\\
					
			\hhline{=::=:}

	\begin{tabular}{c@{}c@{}} \setlength{\fboxrule}{2pt}\fcolorbox{white}{white}{$(1,1,3)$}\\
					    \setlength{\fboxrule}{2pt}\fcolorbox{white}{white}{$(1,1,1,1)$}
					\end{tabular} &  
					\begin{tabular}{@{}l@{}}
						\setlength{\fboxrule}{2pt}\fcolorbox{white}{white}{$\displaystyle{\frac{\left(1-(a_q^{3}-3 a_q q)T+q^3 T^2\right)\left(1-a_q qT +q^3T^{2}\right)^{7}\left(1+a_q qT -(q^3-a_q^2 q^2) T^2+a_q q^4 T^3 +q^6 T^4\right)^{}}{(1-T)(1-qT)^{19}(1+qT+q^{2}T^2)(1+q^2T+q^4T^2)(1-q^2T)^{19}(1-q^3T)}}$} 
					\end{tabular} \\
					\hhline{=::=:}
					\begin{tabular}{c@{}c@{}} \setlength{\fboxrule}{2pt}\fcolorbox{white}{white}{$(1,1,3)$}\\
					    \setlength{\fboxrule}{2pt}\fcolorbox{white}{white}{$(1,1,2)$}
					\end{tabular} &
					\begin{tabular}{@{}l@{}} 
						\setlength{\fboxrule}{2pt}\fcolorbox{white}{white}{$\displaystyle{\frac{\left(1-(a_q^{3}-3 a_q q)T+q^3 T^2\right)\left(1-a_q qT +q^3T^{2}\right)^{7}\left(1+a_q qT -(q^3-a_q^2 q^2) T^2+a_q q^4 T^3 +q^6 T^4\right)^{}}{(1-T)(1-qT)^{16}(1+qT)^{3}(1+qT+q^{2}T^2)(1+q^2T+q^4T^2)(1+q^2T)^{3}(1-q^2T)^{16}(1-q^3T)}}$} 
					\end{tabular}\\
				   \hhline{=::=:}
				\begin{tabular}{c@{}c@{}} \setlength{\fboxrule}{2pt}\fcolorbox{white}{white}{$(1,1,3)$}\\
					    \setlength{\fboxrule}{2pt}\fcolorbox{white}{white}{$(1,3)$}
					\end{tabular} &
				\begin{tabular}{@{}l@{}} 
					\setlength{\fboxrule}{2pt}\fcolorbox{white}{white}{$\displaystyle{\frac{\left(1-(a_q^{3}-3 a_q q)T+q^3 T^2\right)\left(1-a_q qT +q^3T^{2}\right)^{7}\left(1+a_q qT -(q^3-a_q^2 q^2) T^2+a_q q^4 T^3 +q^6 T^4\right)^{}}{(1-T)(1-qT)^{13}(1+qT+q^{2}T^2)^4(1+q^2T+q^4T^2)^4(1-q^2T)^{13}(1-q^3T)}}$} 
				\end{tabular}\\
				\hhline{=::=:}
				\begin{tabular}{c@{}c@{}} \setlength{\fboxrule}{2pt}\fcolorbox{white}{white}{$(1,4)$}\\
					    \setlength{\fboxrule}{2pt}\fcolorbox{white}{white}{$(1,1,1,1)$}
					\end{tabular} &  
					\begin{tabular}{@{}l@{}}
						\setlength{\fboxrule}{2pt}\fcolorbox{white}{white}{$\displaystyle{\frac{\left(1-(a_q^{3}-3 a_q q)T+q^3 T^2\right)\left(1-a_q qT +q^3T^{2}\right)^{6}\left(1+a_q qT +q^3T^{2}\right)^{}\left(1+a_q qT -(q^3-a_q^2 q^2) T^2+a_q q^4 T^3 +q^6 T^4\right)^{}}{(1-T)(1-qT)^{18}(1+qT)(1+q^2T^2)(1+q^4T^2)(1+q^2T)(1-q^2T)^{18}(1-q^3T)}}$} 
					\end{tabular} \\
					\hhline{=::=:}
					\begin{tabular}{c@{}c@{}} \setlength{\fboxrule}{2pt}\fcolorbox{white}{white}{$(1,1,3)$}\\
					    \setlength{\fboxrule}{2pt}\fcolorbox{white}{white}{$(1,4)$}
					\end{tabular} &
					\begin{tabular}{@{}l@{}} 
						\setlength{\fboxrule}{2pt}\fcolorbox{white}{white}{$\displaystyle{\frac{\left(1-(a_q^{3}-3 a_q q)T+q^3 T^2\right)\left(1-a_q qT +q^3T^{2}\right)^{6}\left(1+a_q qT +q^3T^{2}\right)^{}\left(1+a_q qT -(q^3-a_q^2 q^2) T^2+a_q q^4 T^3 +q^6 T^4\right)^{}}{(1-T)(1-qT)^{15}(1+qT)^4(1+q^2T^2)(1+q^4T^2)(1+q^2T)^4(1-q^2T)^{15}(1-q^3T)}}$} 
					\end{tabular}\\
				   \hhline{=::=:}
				\begin{tabular}{c@{}c@{}} \setlength{\fboxrule}{2pt}\fcolorbox{white}{white}{$(1,4)$}\\
					    \setlength{\fboxrule}{2pt}\fcolorbox{white}{white}{$(1,3)$}
					\end{tabular} &
				\begin{tabular}{@{}l@{}} 
					\setlength{\fboxrule}{3pt}\fcolorbox{white}{white}{$\displaystyle{\frac{\left(1-(a_q^{3}-3 a_q q)T+q^3 T^2\right)\left(1-a_q qT +q^3T^{2}\right)^{6}\left(1+a_q qT +q^3T^{2}\right)^{}\left(1+a_q qT -(q^3-a_q^2 q^2) T^2+a_q q^4 T^3 +q^6 T^4\right)^{}}{(1-T)(1-qT)^{12}(1+qT)(1+q^2T^2)(1+qT+q^{2}T^2)^{3}(1+q^2T+q^4T^2)^3(1+q^4T^2)(1+q^2T)(1-q^2T)^{12}(1-q^3T)}}$} 
				\end{tabular}\\			
				
\end{tabular}}	
		\end{varwidth}
		\vspace{2mm}
	\end{center}
\end{table}

\textup{     }
\\
\linia
\vspace{1mm}
\begin{center}$G_{12.2}\colon\;\;\left\langle\left(\begin{array}{rrr} 0 & 1 & 0 \\ 0 & 0 & 1 \\ 1 & 0 & 0 \end{array}\right), \left(\begin{array}{rrr} -1 & 0 & 0 \\ 0 & 1 & 0 \\ 0 & 0 & -1 \end{array}\right) \right\rangle\simeq A_{4}$ \end{center}
\vspace{1mm}
\linia
\begin{itemize}
\item[] \textbf{Hodge numbers: } $$h^{1,1}(\kum_{3}(E, G_{12.2}))=19\quad \textup{and} \quad h^{2,1}(\kum_{3}(E, G_{12.2}))=3$$
\item[] \textbf{Zeta function: } 

\begin{table}[H]
	\begin{center}
		\begin{varwidth}{\textheight}
			\resizebox{0.55\textheight}{!}{
				\begin{tabular}{c||c}
				\textrm{Case:}	 &
					\begin{tabular}{@{}l@{}} \setlength{\fboxrule}{1pt}\fcolorbox{white}{white}{$\displaystyle{Z_{q}\left(\kum_{3}(E, G_{12.2})\right)}$}
					\end{tabular} \\
					\hhline{=::=:}
					$(1,1,1,1)$ &  
					\begin{tabular}{@{}l@{}}
						\setlength{\fboxrule}{1pt}\fcolorbox{white}{white}{$\displaystyle{\frac{\left(1-(a_q^{3}-3 a_q q)T+q^3 T^2\right)\left(1-a_q qT +q^3T^{2}\right)^{3}}{(1-T)(1-qT)^{19}(1-q^2T)^{19}(1-q^3T)}}$} 
					\end{tabular} \\
					\hhline{=::=:}
					$(1,1,2)$ &
					\begin{tabular}{@{}l@{}} 
						\setlength{\fboxrule}{1pt}\fcolorbox{white}{white}{$\displaystyle{\frac{\left(1-(a_q^{3}-3 a_q q)T+q^3 T^2\right)\left(1-a_q qT +q^3T^{2}\right)^{3}}{(1-T)(1-qT)^{15}(1+qT)^{4}(1+q^2T)^{4}(1-q^2T)^{15}(1-q^3T)}}$} 
					\end{tabular}\\
				   \hhline{=::=:}
				$(1,3)$ &
				\begin{tabular}{@{}l@{}} 
					\setlength{\fboxrule}{1pt}\fcolorbox{white}{white}{$\displaystyle{\frac{\left(1-(a_q^{3}-3 a_q q)T+q^3 T^2\right)\left(1-a_q qT +q^3T^{2}\right)^{3}}{(1-T)(1-qT)^{9}(1+qT+q^{2}T^2)^5(1+q^2T+q^4T^2)^5(1-q^2T)^{9}(1-q^3T)}}$} 
				\end{tabular}\\
			\end{tabular}}	
		\end{varwidth}
		\vspace{2mm}
	\end{center}
\end{table}
\end{itemize}

\textup{ }
\\
\linia
\vspace{1mm}
\begin{center}$G_{12.3}\colon\;\; \left\langle\left(\begin{array}{rrr} 0 & 1 & 0 \\ 0 & 0 & 1 \\ 1 & 0 & 0 \end{array}\right), \left(\begin{array}{rrr} 0 & -1 & 1 \\ 0 & -1 & 0 \\ 1 & -1 & 0 \end{array}\right) \right\rangle\simeq A_{4}$\end{center} 

\vspace{2mm}
\begin{center}$G_{12.4}\colon\;\; \left\langle\left(\begin{array}{rrr} 0 & 1 & 0 \\ 0 & 0 & 1 \\ 1 & 0 & 0 \end{array}\right), \left(\begin{array}{rrr} -1 & -1 & -1 \\ 0 & 0 & 1 \\ 0 & 1 & 0 \end{array}\right) \right\rangle\simeq A_{4}$ \end{center}
\vspace{1mm}
\linia
\begin{itemize}
\item[] \textbf{Hodge numbers: } \begin{align*}&h^{1,1}(\kum_{3}(E, G_{12.3}))=h^{1,1}(\kum_{3}(E, G_{12.4}))=7 \\
    &h^{2,1}(\kum_{3}(E, G_{12.3}))=h^{2,1}(\kum_{3}(E, G_{12.4}))=3\end{align*}
\item[] \textbf{Zeta function: } 

\begin{table}[H]
	\begin{center}
		\begin{varwidth}{\textheight}
			\resizebox{0.55\textheight}{!}{
				\begin{tabular}{c||c}
				\textrm{Case:}	 &
					\begin{tabular}{@{}l@{}} \setlength{\fboxrule}{1pt}\fcolorbox{white}{white}{$\displaystyle{Z_{q}\left(\kum_{3}(E, G_{12.3})\right)=Z_{q}\left(\kum_{3}(E, G_{12.4})\right)}$}
					\end{tabular} \\
					\hhline{=::=:}
					$(1,1,1,1)$ &  
					\begin{tabular}{@{}l@{}}
						\setlength{\fboxrule}{1pt}\fcolorbox{white}{white}{$\displaystyle{\frac{\left(1-(a_q^{3}-3 a_q q)T+q^3 T^2\right)\left(1-a_q qT +q^3T^{2}\right)^{3}}{(1-T)(1-qT)^{7}(1-q^2T)^{7}(1-q^3T)}}$} 
					\end{tabular} \\
					\hhline{=::=:}
					$(1,1,2)$ &
					\begin{tabular}{@{}l@{}} 
						\setlength{\fboxrule}{1pt}\fcolorbox{white}{white}{$\displaystyle{\frac{\left(1-(a_q^{3}-3 a_q q)T+q^3 T^2\right)\left(1-a_q qT +q^3T^{2}\right)^{3}}{(1-T)(1-qT)^{6}(1+qT)(1+q^2T)(1-q^2T)^{6}(1-q^3T)}}$} 
					\end{tabular}\\
				   \hhline{=::=:}
				$(1,3)$ &
				\begin{tabular}{@{}l@{}} 
					\setlength{\fboxrule}{1pt}\fcolorbox{white}{white}{$\displaystyle{\frac{\left(1-(a_q^{3}-3 a_q q)T+q^3 T^2\right)\left(1-a_q qT +q^3T^{2}\right)^{3}}{(1-T)(1-qT)^{5}(1+qT+q^{2}T^2)(1+q^2T+q^4T^2)(1-q^2T)^{5}(1-q^3T)}}$} 
				\end{tabular}\\
			\end{tabular}}	
		\end{varwidth}
		\vspace{2mm}
	\end{center}
\end{table}
\end{itemize}
\newpage
\subsection{Order 8}
\textup{ }
\\
\linia
\vspace{1mm}
\begin{center}$G_{8.1}\colon\;\; \left\langle\left(\begin{array}{rrr} 1 & 0 & 0 \\ 0 & 0 & -1 \\ 0 & 1 & 0 \end{array}\right), \left(\begin{array}{rrr} -1 & 0 & 0 \\ 0 & 0 & 1 \\ 0 & 1 & 0 \end{array}\right) \right\rangle\simeq D_{4}$\end{center} 
\vspace{1mm}
\linia
\begin{itemize}
\item[] \textbf{Hodge numbers: } $$h^{1,1}(\kum_{3}(E, G_{8.1}))=36\quad \textup{and} \quad h^{2,1}(\kum_{3}(E, G_{8.1}))=6$$
\item[] \textbf{Zeta function: } 

\begin{table}[H]
	\begin{center}
		\begin{varwidth}{\textheight}
			\resizebox{0.55\textheight}{!}{
				\begin{tabular}{c||c}
				\textrm{Case:}	 &
					\begin{tabular}{@{}l@{}} \setlength{\fboxrule}{1pt}\fcolorbox{white}{white}{$\displaystyle{Z_{q}\left(\kum_{3}(E, G_{8.1})\right)}$}
					\end{tabular} \\
					\hhline{=::=:}
					$(1,1,1,1)$ &  
					\begin{tabular}{@{}l@{}}
						\setlength{\fboxrule}{1pt}\fcolorbox{white}{white}{$\displaystyle{\frac{\left(1-(a_q^{3}-3 a_q q)T+q^3 T^2\right)\left(1-a_q qT +q^3T^{2}\right)^{6}}{(1-T)(1-qT)^{36}(1-q^2T)^{36}(1-q^3T)}}$} 
					\end{tabular} \\
					\hhline{=::=:}
					$(1,1,2)$ &
					\begin{tabular}{@{}l@{}} 
						\setlength{\fboxrule}{1pt}\fcolorbox{white}{white}{$\displaystyle{\frac{\left(1-(a_q^{3}-3 a_q q)T+q^3 T^2\right)\left(1+a_q qT +q^3T^{2}\right)\left(1-a_q qT +q^3T^{2}\right)^{5}}{(1-T)(1-qT)^{27}(1+qT)^9(1+q^2T)^9(1-q^2T)^{27}(1-q^3T)}}$} 
					\end{tabular}\\
				   \hhline{=::=:}
				$(1,3)$ &
				\begin{tabular}{@{}l@{}} 
					\setlength{\fboxrule}{1pt}\fcolorbox{white}{white}{$\displaystyle{\frac{\left(1-(a_q^{3}-3 a_q q)T+q^3 T^2\right)\left(1-a_q qT +q^3T^{2}\right)^{4}\left(1+a_q qT -(q^3-a_q^2 q^2) T^2+a_q q^4 T^3 +q^6 T^4\right)^{}}{(1-T)(1-qT)^{18}(1+qT+q^{2}T^2)^9(1+q^2T+q^4T^2)^9(1-q^2T)^{18}(1-q^3T)}
}$} 
				\end{tabular}\\
			\end{tabular}}	
		\end{varwidth}
		\vspace{2mm}
	\end{center}
\end{table}
\end{itemize}

\textup{     }
\\
\linia
\vspace{1mm}
\begin{center}$G_{8.2}\colon\;\;\left\langle\left(\begin{array}{rrr} 1 & 0 & 1 \\ 0 & 0 & -1 \\ 0 & 1 & 0 \end{array}\right), \left(\begin{array}{rrr} -1 & 0 & 0 \\ 0 & 0 & -1 \\ 0 & -1 & 0 \end{array}\right) \right\rangle\simeq D_{4}$\end{center} 
\vspace{1mm}
\linia
\begin{itemize}
\item[] \textbf{Hodge numbers: } $$h^{1,1}(\kum_{3}(E, G_{8.2}))=15\quad \textup{and} \quad h^{2,1}(\kum_{3}(E, G_{8.2}))=3$$
\item[] \textbf{Zeta function: } 
\begin{table}[H]
	\begin{center}
		\begin{varwidth}{\textheight}
			\resizebox{0.55\textheight}{!}{
				\begin{tabular}{c||c}
				\textrm{Case:}	 &
					\begin{tabular}{@{}l@{}} \setlength{\fboxrule}{1pt}\fcolorbox{white}{white}{$\displaystyle{Z_{q}\left(\kum_{3}(E, G_{8.2})\right)}$}
					\end{tabular} \\
					\hhline{=::=:}
					$(1,1,1,1)$ &  
					\begin{tabular}{@{}l@{}}
						\setlength{\fboxrule}{1pt}\fcolorbox{white}{white}{$\displaystyle{\frac{\left(1-(a_q^{3}-3 a_q q)T+q^3 T^2\right)\left(1-a_q qT +q^3T^{2}\right)^{3}}{(1-T)(1-qT)^{15}(1-q^2T)^{15}(1-q^3T)}}$} 
					\end{tabular} \\
					\hhline{=::=:}
					$(1,1,2)$ &
					\begin{tabular}{@{}l@{}} 
						\setlength{\fboxrule}{1pt}\fcolorbox{white}{white}{$\displaystyle{\frac{\left(1-(a_q^{3}-3 a_q q)T+q^3 T^2\right)\left(1-a_q qT +q^3T^{2}\right)^{3}}{(1-T)(1-qT)^{12}(1+qT)^3(1+q^2T)^3(1-q^2T)^{12}(1-q^3T)}}$} 
					\end{tabular}\\
				   \hhline{=::=:}
				$(1,3)$ &
				\begin{tabular}{@{}l@{}} 
					\setlength{\fboxrule}{1pt}\fcolorbox{white}{white}{$\displaystyle{\frac{\left(1-(a_q^{3}-3 a_q q)T+q^3 T^2\right)\left(1-a_q qT +q^3T^{2}\right)^{3}}{(1-T)(1-qT)^{9}(1+qT+q^{2}T^2)^3(1+q^2T+q^4T^2)^3(1-q^2T)^{9}(1-q^3T)}
}$} 
				\end{tabular}\\
			\end{tabular}}	
		\end{varwidth}
		\vspace{2mm}
	\end{center}
\end{table}

\end{itemize}

\newpage
\subsection{Order 6}

\textup{     }
\\
\linia
\vspace{1mm}
\begin{center}$G_{6.1}\colon\;\; \left\langle\left(\begin{array}{rrr} 1 & 0 & 0 \\ 0 & 0 & -1 \\ 0 & 1 & -1 \end{array}\right), \left(\begin{array}{rrr} -1 & 0 & 0 \\ 0 & 0 & -1 \\ 0 & -1 & 0 \end{array}\right) \right\rangle\simeq S_{3}$ \end{center} 
\vspace{2mm}
\begin{center}$G_{6.2}\colon\;\;\left\langle\left(\begin{array}{rrr} 1 & 0 & 0 \\ 0 & 0 & -1 \\ 0 & 1 & -1 \end{array}\right), \left(\begin{array}{rrr} -1 & 0 & 0 \\ 0 & 0 & 1 \\ 0 & 1 & 0 \end{array}\right) \right\rangle\simeq S_{3}$ \end{center}
\vspace{1mm}
\linia
\begin{itemize}

\item[] \textbf{Hodge numbers: } \begin{align*}&h^{1,1}(\kum_{3}(E, G_{6.1}))=h^{1,1}(\kum_{3}(E, G_{6.2}))=15,\;\;\;h^{2,1}(\kum_{3}(E, G_{6.1}))=h^{2,1}(\kum_{3}(E, G_{6.2}))=15\end{align*}

\item[] \textbf{Zeta function: } We get 21 possibilities for the zeta function of $\kum_{3}(E, G_{6.1})$ and $\kum_{3}(E, G_{6.2})$:

$$\frac{\left(1-(a_q^{3}-3 a_q q)T+q^3 T^2\right)\left(1-a_q qT +q^3T^{2}\right)^{2}}{(1-T)(1-qT)^{2}(1-q^2T)^{2}(1-q^3T)}\cdot \left(\left(\frac{1-a_q qT +q^3T^{2}}{(1-qT)(1-q^2T)}\right)\otimes P_{(\eta)}\right)\cdot \left(\left(\frac{1-a_q qT +q^3T^{2}}{(1-qT)(1-q^2T)}\right)\otimes P_{(\lambda)}\right),$$ where
\begin{align*}&(\lambda)\in \left\{(1, 1, 1, 1, 1, 1, 1, 1, 1), (1, 1, 1, 2, 2, 2), (1, 2, 2, 2, 2), (1, 1, 1, 3, 3), (1,4,4), (1,2,6), (1,8)\right\}\\&
(\eta)\in \{(1,1,1,1), (1,1,2), (1,3)\}\end{align*}
\end{itemize}
\vspace{-2mm}
\begin{table}[H]
	\begin{center}
		\begin{varwidth}{\textheight}
			\resizebox{0.65\textheight}{!}{
				\begin{tabular}{c||c}
				\textrm{Case:}	 &
					\begin{tabular}{@{}l@{}} \setlength{\fboxrule}{2pt}\fcolorbox{white}{white}{$\displaystyle{Z_{q}\left(\kum_{3}(E, G_{6.1})\right)=Z_{q}\left(\kum_{3}(E, G_{6.2})\right)}$}
					\end{tabular} \\
					\hhline{=::=:}
					\begin{tabular}{c@{}c@{}} \setlength{\fboxrule}{2pt}\fcolorbox{white}{white}{$(1)^9$}\\
					\setlength{\fboxrule}{2pt}\fcolorbox{white}{white}{$(1,1,1,1)$}
					\end{tabular}
					 &  
					\begin{tabular}{@{}l@{}}
						\setlength{\fboxrule}{2pt}\fcolorbox{white}{white}{$\displaystyle{\frac{\left(1-(a_q^{3}-3 a_q q)T+q^3 T^2\right)\left(1-a_q qT +q^3T^{2}\right)^{15}}{(1-T)(1-qT)^{15}(1-q^2T)^{15}(1-q^3T)}}$} 
					\end{tabular} \\
					\hhline{=::=:}
					\begin{tabular}{c@{}c@{}} \setlength{\fboxrule}{2pt}\fcolorbox{white}{white}{$(1)^9$}\\
					\setlength{\fboxrule}{2pt}\fcolorbox{white}{white}{$(1,1,2)$}
					\end{tabular}
					 &
					\begin{tabular}{@{}l@{}} 
						\setlength{\fboxrule}{2pt}\fcolorbox{white}{white}{$\displaystyle{\frac{\left(1-(a_q^{3}-3 a_q q)T+q^3 T^2\right)\left(1+a_q qT +q^3T^{2}\right)^{}\left(1-a_q qT +q^3T^{2}\right)^{14}}{(1-T)(1-qT)^{14}(1+qT)(1+q^2T)(1-q^2T)^{14}(1-q^3T)}}$} 
					\end{tabular}\\
				   \hhline{=::=:}
				\begin{tabular}{c@{}c@{}} \setlength{\fboxrule}{2pt}\fcolorbox{white}{white}{$(1)^9$}\\
					\setlength{\fboxrule}{2pt}\fcolorbox{white}{white}{$(1,3)$}
					\end{tabular} &
				\begin{tabular}{@{}l@{}} 
					\setlength{\fboxrule}{2pt}\fcolorbox{white}{white}{$\displaystyle{\frac{\left(1-(a_q^{3}-3 a_q q)T+q^3 T^2\right)\left(1-a_q qT +q^3T^{2}\right)^{13}\left(1+a_q qT -(q^3-a_q^2 q^2) T^2+a_q q^4 T^3 +q^6 T^4\right)^{}}{(1-T)(1-qT)^{13}(1+qT+q^{2}T^2)(1+q^2T+q^4T^2)(1-q^2T)^{13}(1-q^3T)}}$} 
				\end{tabular}\\
					\hhline{=::=:}
					\begin{tabular}{c@{}c@{}} \setlength{\fboxrule}{2pt}\fcolorbox{white}{white}{$(1,1,1,2,2,2)$}\\
					\setlength{\fboxrule}{2pt}\fcolorbox{white}{white}{$(1,1,1,1)$}
					\end{tabular}
					 &  
					\begin{tabular}{@{}l@{}}
						\setlength{\fboxrule}{2pt}\fcolorbox{white}{white}{$\displaystyle{\frac{\left(1-(a_q^{3}-3 a_q q)T+q^3 T^2\right)\left(1+a_q qT +q^3T^{2}\right)^{3}\left(1-a_q qT +q^3T^{2}\right)^{12}}{(1-T)(1-qT)^{12}(1+qT)^3(1+q^2T)^3(1-q^2T)^{12}(1-q^3T)}}$} 
					\end{tabular} \\
					\hhline{=::=:}
					\begin{tabular}{c@{}c@{}} \setlength{\fboxrule}{2pt}\fcolorbox{white}{white}{$(1,1,1,2,2,2)$}\\
					\setlength{\fboxrule}{2pt}\fcolorbox{white}{white}{$(1,1,2)$}
					\end{tabular}
					 &
					\begin{tabular}{@{}l@{}} 
						\setlength{\fboxrule}{2pt}\fcolorbox{white}{white}{$\displaystyle{\frac{\left(1-(a_q^{3}-3 a_q q)T+q^3 T^2\right)\left(1+a_q qT +q^3T^{2}\right)^{4}\left(1-a_q qT +q^3T^{2}\right)^{11}}{(1-T)(1-qT)^{11}(1+qT)^4(1+q^2T)^4(1-q^2T)^{11}(1-q^3T)}}$} 
					\end{tabular}\\
				   \hhline{=::=:}
				\begin{tabular}{c@{}c@{}} \setlength{\fboxrule}{2pt}\fcolorbox{white}{white}{$(1,1,1,2,2,2)$}\\
					\setlength{\fboxrule}{2pt}\fcolorbox{white}{white}{$(1,3)$}
					\end{tabular} &
				\begin{tabular}{@{}l@{}} 
					\setlength{\fboxrule}{2pt}\fcolorbox{white}{white}{$\displaystyle{\frac{\left(1-(a_q^{3}-3 a_q q)T+q^3 T^2\right)\left(1-a_q qT +q^3T^{2}\right)^{10}\left(1+a_q qT +q^3T^{2}\right)^{3}\left(1+a_q qT -(q^3-a_q^2 q^2) T^2+a_q q^4 T^3 +q^6 T^4\right)^{}}{(1-T)(1-qT)^{10}(1+qT)^3(1+qT+q^{2}T^2)(1+q^2T+q^4T^2)(1+q^2T)^3(1-q^2T)^{10}(1-q^3T)}}$} 
				\end{tabular}\\
				\hhline{=::=:}
				\begin{tabular}{c@{}c@{}} \setlength{\fboxrule}{2pt}\fcolorbox{white}{white}{$(1,2,2,2,2)$}\\
					\setlength{\fboxrule}{2pt}\fcolorbox{white}{white}{$(1,1,1,1)$}
					\end{tabular}
					 &  
					\begin{tabular}{@{}l@{}}
						\setlength{\fboxrule}{2pt}\fcolorbox{white}{white}{$\displaystyle{\frac{\left(1-(a_q^{3}-3 a_q q)T+q^3 T^2\right)\left(1+a_q qT +q^3T^{2}\right)^{4}\left(1-a_q qT +q^3T^{2}\right)^{11}}{(1-T)(1-qT)^{11}(1+qT)^4(1+q^2T)^4(1-q^2T)^{11}(1-q^3T)}}$} 
					\end{tabular} \\
				\hhline{=::=:}
				\begin{tabular}{c@{}c@{}} \setlength{\fboxrule}{2pt}\fcolorbox{white}{white}{$(1,2,2,2,2)$}\\
					\setlength{\fboxrule}{2pt}\fcolorbox{white}{white}{$(1,1,2)$}
					\end{tabular}
					 &
					\begin{tabular}{@{}l@{}} 
						\setlength{\fboxrule}{2pt}\fcolorbox{white}{white}{$\displaystyle{\frac{\left(1-(a_q^{3}-3 a_q q)T+q^3 T^2\right)\left(1+a_q qT +q^3T^{2}\right)^{5}\left(1-a_q qT +q^3T^{2}\right)^{10}}{(1-T)(1-qT)^{10}(1+qT)^5(1+q^2T)^5(1-q^2T)^{10}(1-q^3T)}}$} 
					\end{tabular}\\
				   \hhline{=::=:}
				\begin{tabular}{c@{}c@{}} \setlength{\fboxrule}{2pt}\fcolorbox{white}{white}{$(1,2,2,2,2)$}\\
					\setlength{\fboxrule}{2pt}\fcolorbox{white}{white}{$(1,3)$}
					\end{tabular} &
				\begin{tabular}{@{}l@{}} 
					\setlength{\fboxrule}{2pt}\fcolorbox{white}{white}{$\displaystyle{\frac{\left(1-(a_q^{3}-3 a_q q)T+q^3 T^2\right)\left(1-a_q qT +q^3T^{2}\right)^{9}\left(1+a_q qT +q^3T^{2}\right)^{4}\left(1+a_q qT -(q^3-a_q^2 q^2) T^2+a_q q^4 T^3 +q^6 T^4\right)^{}}{(1-T)(1-qT)^{9}(1+qT)^4(1+qT+q^{2}T^2)(1+q^2T+q^4T^2)(1+q^2T)^4(1-q^2T)^{9}(1-q^3T)}}$} 
				\end{tabular}\\
				\hhline{=::=:}
				\begin{tabular}{c@{}c@{}} \setlength{\fboxrule}{2pt}\fcolorbox{white}{white}{$(1,1,1,3,3)$}\\
					\setlength{\fboxrule}{2pt}\fcolorbox{white}{white}{$(1,1,1,1)$}
					\end{tabular}
					 &  
					\begin{tabular}{@{}l@{}}
						\setlength{\fboxrule}{2pt}\fcolorbox{white}{white}{$\displaystyle{\frac{\left(1-(a_q^{3}-3 a_q q)T+q^3 T^2\right)\left(1-a_q qT +q^3T^{2}\right)^{11}\left(1+a_q qT -(q^3-a_q^2 q^2) T^2+a_q q^4 T^3 +q^6 T^4\right)^{2}}{(1-T)(1-qT)^{11}(1+qT+q^{2}T^2)^2(1+q^2T+q^4T^2)^2(1-q^2T)^{11}(1-q^3T)}}$} 
					\end{tabular} \\
				\hhline{=::=:}	
				\begin{tabular}{c@{}c@{}} \setlength{\fboxrule}{2pt}\fcolorbox{white}{white}{$(1,1,1,3,3)$}\\
					\setlength{\fboxrule}{2pt}\fcolorbox{white}{white}{$(1,1,2)$}
					\end{tabular}
					 &
					\begin{tabular}{@{}l@{}} 
						\setlength{\fboxrule}{2pt}\fcolorbox{white}{white}{$\displaystyle{\frac{\left(1-(a_q^{3}-3 a_q q)T+q^3 T^2\right)\left(1-a_q qT +q^3T^{2}\right)^{10}\left(1+a_q qT +q^3T^{2}\right)^{}\left(1+a_q qT -(q^3-a_q^2 q^2) T^2+a_q q^4 T^3 +q^6 T^4\right)^{2}}{(1-T)(1-qT)^{10}(1+qT)(1+qT+q^{2}T^2)^2(1+q^2T+q^4T^2)^2(1+q^2T)(1-q^2T)^{10}(1-q^3T)}}$} 
					\end{tabular}\\
				\hhline{=::=:}
					\begin{tabular}{c@{}c@{}} \setlength{\fboxrule}{2pt}\fcolorbox{white}{white}{$(1,1,1,3,3)$}\\
					\setlength{\fboxrule}{2pt}\fcolorbox{white}{white}{$(1,3)$}
					\end{tabular} &
				\begin{tabular}{@{}l@{}} 
					\setlength{\fboxrule}{2pt}\fcolorbox{white}{white}{$\displaystyle{\frac{\left(1-(a_q^{3}-3 a_q q)T+q^3 T^2\right)\left(1-a_q qT +q^3T^{2}\right)^{9}\left(1+a_q qT +q^3T^{2}\right)^{4}\left(1+a_q qT -(q^3-a_q^2 q^2) T^2+a_q q^4 T^3 +q^6 T^4\right)^{}}{(1-T)(1-qT)^{9}(1+qT+q^{2}T^2)^3(1+q^2T+q^4T^2)^3(1-q^2T)^{9}(1-q^3T)}}$} 
				\end{tabular}\\

								\end{tabular}}	
		\end{varwidth}
		\vspace{2mm}
	\end{center}
\end{table}

\begin{table}[H]
	\begin{center}
		\begin{varwidth}{\textheight}
			\resizebox{0.65\textheight}{!}{
				\begin{tabular}{c||c}

					\begin{tabular}{c@{}c@{}} \setlength{\fboxrule}{2pt}\fcolorbox{white}{white}{$(1,4,4)$}\\
					\setlength{\fboxrule}{2pt}\fcolorbox{white}{white}{$(1,1,1,1)$}
					\end{tabular}
					 &  
					\begin{tabular}{@{}l@{}}
						\setlength{\fboxrule}{2pt}\fcolorbox{white}{white}{$\displaystyle{\frac{\left(1-(a_q^{3}-3 a_q q)T+q^3 T^2\right)\left(1-a_q qT +q^3T^{2}\right)^{9}\left(1+a_q qT +q^3T^{2}\right)^{2}\left(1+a_q qT -(q^3-a_q^2 q^2) T^2+a_q q^4 T^3 +q^6 T^4\right)^{2}}{(1-T)(1-qT)^{9}(1+qT)^2(1+q^2T^2)^2(1+q^4T^2)^2(1+q^2T)^2(1-q^2T)^{9}(1-q^3T)}}$} 
					\end{tabular} \\
					\hhline{=::=:}
					\begin{tabular}{c@{}c@{}} \setlength{\fboxrule}{2pt}\fcolorbox{white}{white}{$(1,4,4)$}\\
					\setlength{\fboxrule}{2pt}\fcolorbox{white}{white}{$(1,1,2)$}
					\end{tabular}
					 &
					\begin{tabular}{@{}l@{}} 
						\setlength{\fboxrule}{2pt}\fcolorbox{white}{white}{$\displaystyle{\frac{\left(1-(a_q^{3}-3 a_q q)T+q^3 T^2\right)\left(1-a_q qT +q^3T^{2}\right)^{8}\left(1+a_q qT +q^3T^{2}\right)^{3}\left(1+a_q qT -(q^3-a_q^2 q^2) T^2+a_q q^4 T^3 +q^6 T^4\right)^{2}}{(1-T)(1-qT)^{8}(1+qT)^3(1+q^2T^2)^2(1+q^4T^2)^2(1+q^2T)^3(1-q^2T)^{8}(1-q^3T)}}$} 
					\end{tabular}\\
				   \hhline{=::=:}
				\begin{tabular}{c@{}c@{}} \setlength{\fboxrule}{2pt}\fcolorbox{white}{white}{$(1,4,4)$}\\
					\setlength{\fboxrule}{2pt}\fcolorbox{white}{white}{$(1,3)$}
					\end{tabular} &
				\begin{tabular}{@{}l@{}} 
					\setlength{\fboxrule}{2pt}\fcolorbox{white}{white}{$\displaystyle{\frac{\left(1-(a_q^{3}-3 a_q q)T+q^3 T^2\right)\left(1-a_q qT +q^3T^{2}\right)^{7}\left(1+a_q qT +q^3T^{2}\right)^{2}\left(1+a_q qT -(q^3-a_q^2 q^2) T^2+a_q q^4 T^3 +q^6 T^4\right)^{3}}{(1-T)(1-qT)^{7}(1+qT)^2(1+q^2T^2)^2(1+qT+q^{2}T^2)(1+q^2T+q^4T^2)(1+q^4T^2)^2(1+q^2T)^2(1-q^2T)^{7}(1-q^3T)}}$} 
				\end{tabular}\\
				\hhline{=::=:}
					\begin{tabular}{c@{}c@{}} \setlength{\fboxrule}{2pt}\fcolorbox{white}{white}{$(1,2,6)$}\\
					\setlength{\fboxrule}{2pt}\fcolorbox{white}{white}{$(1,1,1,1)$}
					\end{tabular}
					 &  
					\begin{tabular}{@{}l@{}}
						\setlength{\fboxrule}{2pt}\fcolorbox{white}{white}{$
						\begin{aligned}&\displaystyle{\frac{\left(1-(a_q^{3}-3 a_q q)T+q^3 T^2\right)\left(1-a_q qT +q^3T^{2}\right)^{9}\left(1+a_q qT -(q^3-a_q^2 q^2) T^2+a_q q^4 T^3 +q^6 T^4\right)^{}}{(1-T)(1-qT)^{9}(1+qT)^{2}(1+qT+q^{2}T^2)(1-qT+q^{2}T^2)(1-q^2T+q^4T^2)}}\times \\ &\times \frac{\left(1+a_q qT +q^3T^{2}\right)^{2}\left(1-a_q qT -(q^3-a_q^2 q^2) T^2-a_q q^4 T^3 +q^6 T^4\right)^{}}{(1+q^2T+q^4T^2)(1+q^2T)^{2}(1-q^2T)^{9}(1-q^3T)}\end{aligned}$} 
					\end{tabular} \\
	\hhline{=::=:}

				\begin{tabular}{c@{}c@{}} \setlength{\fboxrule}{2pt}\fcolorbox{white}{white}{$(1,2,6)$}\\
					\setlength{\fboxrule}{2pt}\fcolorbox{white}{white}{$(1,1,2)$}
					\end{tabular}
					 &
					\begin{tabular}{@{}l@{}} 
						\setlength{\fboxrule}{2pt}\fcolorbox{white}{white}{$\begin{aligned}&\displaystyle{\frac{\left(1-(a_q^{3}-3 a_q q)T+q^3 T^2\right)\left(1-a_q qT +q^3T^{2}\right)^{8}\left(1+a_q qT -(q^3-a_q^2 q^2) T^2+a_q q^4 T^3 +q^6 T^4\right)^{}}{(1-T)(1-qT)^{8}(1+qT)^{3}(1+qT+q^{2}T^2)(1-qT+q^{2}T^2)(1-q^2T+q^4T^2)}}\times \\ &\times \frac{\left(1+a_q qT +q^3T^{2}\right)^{3}\left(1-a_q qT -(q^3-a_q^2 q^2) T^2-a_q q^4 T^3 +q^6 T^4\right)^{}}{(1+q^2T+q^4T^2)(1+q^2T)^{3}(1-q^2T)^{8}(1-q^3T)}\end{aligned}$} 
					\end{tabular}\\	
				\hhline{=::=:}

\begin{tabular}{c@{}c@{}} \setlength{\fboxrule}{2pt}\fcolorbox{white}{white}{$(1,2,6)$}\\
					\setlength{\fboxrule}{2pt}\fcolorbox{white}{white}{$(1,3)$}
					\end{tabular} &	
				\begin{tabular}{@{}l@{}} 
					\setlength{\fboxrule}{2pt}\fcolorbox{white}{white}{$\begin{aligned}&\displaystyle{\frac{\left(1-(a_q^{3}-3 a_q q)T+q^3 T^2\right)\left(1-a_q qT +q^3T^{2}\right)^{7}\left(1+a_q qT -(q^3-a_q^2 q^2) T^2+a_q q^4 T^3 +q^6 T^4\right)^{2}}{(1-T)(1-qT)^{7}(1+qT)^{2}(1+qT+q^{2}T^2)^2(1-qT+q^{2}T^2)(1-q^2T+q^4T^2)}}\times \\ & \times \frac{\left(1+a_q qT +q^3T^{2}\right)^{2}\left(1-a_q qT -(q^3-a_q^2 q^2) T^2-a_q q^4 T^3 +q^6 T^4\right)^{}}{(1+q^2T+q^4T^2)^2(1+q^2T)^{2}(1-q^2T)^{7}(1-q^3T)}\end{aligned}$} 
				\end{tabular}\\
				\hhline{=::=:}
					\begin{tabular}{c@{}c@{}} \setlength{\fboxrule}{2pt}\fcolorbox{white}{white}{$(1,8)$}\\
					\setlength{\fboxrule}{2pt}\fcolorbox{white}{white}{$(1,1,1,1)$}
					\end{tabular}
					 &  
					\begin{tabular}{@{}l@{}}
						\setlength{\fboxrule}{2pt}\fcolorbox{white}{white}{$\begin{aligned}&\displaystyle{\frac{\left(1-(a_q^{3}-3 a_q q)T+q^3 T^2\right)\left(1-a_q qT +q^3T^{2}\right)^{8}\left(1+a_q qT +q^3T^{2}\right)^{}\left(1-2q^3T^2+a_q^2 q^2 T^2 +q^6 T^4\right)^{}}{(1-T)(1-qT)^{8}(1+qT)^{}(1+q^2T^{2})^{}(1+q^4T^2)(1+q^4T^4)}}\times \\ &\times \frac{\left(1+2q^6 T^4 -4a_q^2 q^5 T^4+a_q^4 q^4 T^4 +q^{12}T^8\right)^{}}{(1+q^8T^4)(1+q^4T^2)^{}(1+q^2T)^{}(1-q^2T)^{8}(1-q^3T)}\end{aligned}$} 
					\end{tabular} \\
					\hhline{=::=:}
					\begin{tabular}{c@{}c@{}} \setlength{\fboxrule}{2pt}\fcolorbox{white}{white}{$(1,8)$}\\
					\setlength{\fboxrule}{2pt}\fcolorbox{white}{white}{$(1,1,2)$}
					\end{tabular}
					 &
					\begin{tabular}{@{}l@{}} 
						\setlength{\fboxrule}{2pt}\fcolorbox{white}{white}{$\begin{aligned}&\displaystyle{\frac{\left(1-(a_q^{3}-3 a_q q)T+q^3 T^2\right)\left(1-a_q qT +q^3T^{2}\right)^{7}\left(1+a_q qT +q^3T^{2}\right)^{2}\left(1-2q^3T^2+a_q^2 q^2 T^2 +q^6 T^4\right)^{}}{(1-T)(1-qT)^{7}(1+qT)^{2}(1+q^2T^{2})^{}(1+q^4T^2)(1+q^4T^4)}}\times \\ &\times \frac{\left(1+2q^6 T^4 -4a_q^2 q^5 T^4+a_q^4 q^4 T^4 +q^{12}T^8\right)^{}}{(1+q^8T^4)(1+q^4T^2)^{}(1+q^2T)^{2}(1-q^2T)^{7}(1-q^3T)}\end{aligned}$} 
					\end{tabular}\\
				   \hhline{=::=:}
				\begin{tabular}{c@{}c@{}} \setlength{\fboxrule}{2pt}\fcolorbox{white}{white}{$(1,8)$}\\
					\setlength{\fboxrule}{2pt}\fcolorbox{white}{white}{$(1,3)$}
					\end{tabular} &
				\begin{tabular}{@{}l@{}} 
					\setlength{\fboxrule}{2pt}\fcolorbox{white}{white}{$\begin{aligned}&\displaystyle{\frac{\left(1-(a_q^{3}-3 a_q q)T+q^3 T^2\right)\left(1-a_q qT +q^3T^{2}\right)^{6}\left(1+a_q qT +q^3T^{2}\right)^{}\left(1-2q^3T^2+a_q^2 q^2 T^2 +q^6 T^4\right)^{}}{(1-T)(1-qT)^{6}(1+qT)^{}(1+q^2T^{2})^{}(1+q^4T^2)(1+qT+q^{2}T^2)(1+q^4T^4)}}\times\\&\times\frac{\left(1+a_q qT -(q^3-a_q^2 q^2) T^2+a_q q^4 T^3 +q^6 T^4\right)^{}\left(1+2q^6 T^4 -4a_q^2 q^5 T^4+a_q^4 q^4 T^4 +q^{12}T^8\right)^{}}{(1+q^8T^4)(1+q^2T+q^4T^2)(1+q^4T^2)^{}(1+q^2T)^{}(1-q^2T)^{6}(1-q^3T)}\end{aligned}$} 
				\end{tabular}\\
			\end{tabular}}	
		\end{varwidth}
		\vspace{2mm}
	\end{center}
\end{table}

\textup{     }
\\
\linia
\vspace{1mm}
\begin{center}$G_{6.3}\colon\;\; \left\langle\left(\begin{array}{rrr} 0 & 1 & 0 \\ 0 & 0 & 1 \\ 1 & 0 & 0 \end{array}\right), \left(\begin{array}{rrr} 0 & 0 & -1 \\ 0 & -1 & 0 \\ -1 & 0 & 0 \end{array}\right) \right\rangle\simeq S_{3}$ \end{center} 
\vspace{1mm}
\linia
\begin{itemize}
\item[] \textbf{Hodge numbers: } $$h^{1,1}(\kum_{3}(E, G_{6.3}))=7\quad \textup{and} \quad h^{2,1}(\kum_{3}(E, G_{6.3}))=7$$

\item[] \textbf{Zeta function: } 

\begin{table}[H]
	\begin{center}
		\begin{varwidth}{\textheight}
			\resizebox{0.55\textheight}{!}{
				\begin{tabular}{c||c}
				\textrm{Case:}	 &
					\begin{tabular}{@{}l@{}} \setlength{\fboxrule}{1pt}\fcolorbox{white}{white}{$\displaystyle{Z_{q}\left(\kum_{3}(E, G_{6.3})\right)}$}
					\end{tabular} \\
					\hhline{=::=:}
					$(1,1,1,1)$ &  
					\begin{tabular}{@{}l@{}}
						\setlength{\fboxrule}{1pt}\fcolorbox{white}{white}{$\displaystyle{\frac{\left(1-(a_q^{3}-3 a_q q)T+q^3 T^2\right)\left(1-a_q qT +q^3T^{2}\right)^{7}}{(1-T)(1-qT)^{7}(1-q^2T)^{7}(1-q^3T)}}$} 
					\end{tabular} \\
					\hhline{=::=:}
					$(1,1,2)$ &
					\begin{tabular}{@{}l@{}} 
						\setlength{\fboxrule}{1pt}\fcolorbox{white}{white}{$\displaystyle{\frac{\left(1-(a_q^{3}-3 a_q q)T+q^3 T^2\right)\left(1-a_q qT +q^3T^{2}\right)^{3}}{(1-T)(1-qT)^{12}(1+qT)^3(1+q^2T)^3(1-q^2T)^{12}(1-q^3T)}}$} 
					\end{tabular}\\
				   \hhline{=::=:}
				$(1,3)$ &
				\begin{tabular}{@{}l@{}} 
					\setlength{\fboxrule}{1pt}\fcolorbox{white}{white}{$\displaystyle{\frac{\left(1-(a_q^{3}-3 a_q q)T+q^3 T^2\right)\left(1-a_q qT +q^3T^{2}\right)^{5}\left(1+a_q qT -(q^3-a_q^2 q^2) T^2+a_q q^4 T^3 +q^6 T^4\right)^{}}{(1-T)(1-qT)^{5}(1+qT+q^{2}T^2)(1+q^2T+q^4T^2)(1-q^2T)^{5}(1-q^3T)}
}$} 
				\end{tabular}\\
			\end{tabular}}	
		\end{varwidth}
		\vspace{2mm}
	\end{center}
\end{table}
\end{itemize}

\subsection{Order 4}

\textup{     }
\\
\linia
\vspace{1mm}
\begin{center}$G_{4.1}\colon\;\; \left\langle\left(\begin{array}{rrr} 1 & 0 & 0 \\ 0 & -1 & 0 \\ 0 & 0 & -1 \end{array}\right), \left(\begin{array}{rrr} -1 & 0 & 0 \\ 0 & -1 & 0 \\ 0 & 0 & 1 \end{array}\right) \right\rangle\simeq \ZZ_2\oplus \ZZ_2$\end{center} 
\vspace{1mm}
\linia
\vspace{5mm}
\begin{itemize}
\item[] \textbf{Hodge numbers: } $$h^{1,1}\left(\kum_{3}(E, G_{4.1})\right)=51\quad \textup{and} \quad h^{2,1}(\kum_{3}(E, G_{4.1}))=3$$
\item[] \textbf{Zeta function: }

\begin{table}[H]
	\begin{center}
		\begin{varwidth}{\textheight}
			\resizebox{0.55\textheight}{!}{
				\begin{tabular}{c||c}
				\textrm{Case:}	 &
					\begin{tabular}{@{}l@{}} \setlength{\fboxrule}{1pt}\fcolorbox{white}{white}{$\displaystyle{Z_{q}\left(\kum_{3}(E, G_{4.1})\right)}$}
					\end{tabular} \\
					\hhline{=::=:}
					$(1,1,1,1)$ &  
					\begin{tabular}{@{}l@{}}
						\setlength{\fboxrule}{1pt}\fcolorbox{white}{white}{$\displaystyle{\frac{\left(1-(a_q^{3}-3 a_q q)T+q^3 T^2\right)\left(1-a_q qT +q^3T^{2}\right)^{3}}{(1-T)(1-qT)^{51}(1-q^2T)^{51}(1-q^3T)}}$} 
					\end{tabular} \\
					\hhline{=::=:}
					$(1,1,2)$ &
					\begin{tabular}{@{}l@{}} 
						\setlength{\fboxrule}{1pt}\fcolorbox{white}{white}{$\displaystyle{\frac{\left(1-(a_q^{3}-3 a_q q)T+q^3 T^2\right)\left(1-a_q qT +q^3T^{2}\right)^{3}}{(1-T)(1-qT)^{39}(1+qT)^{12}(1+q^2T)^{12}(1-q^2T)^{39}(1-q^3T)}}$} 
					\end{tabular}\\
				   \hhline{=::=:}
				$(1,3)$ &
				\begin{tabular}{@{}l@{}} 
					\setlength{\fboxrule}{1pt}\fcolorbox{white}{white}{$\displaystyle{\frac{\left(1-(a_q^{3}-3 a_q q)T+q^3 T^2\right)\left(1-a_q qT +q^3T^{2}\right)^{3}}{(1-T)(1-qT)^{21}(1+qT+q^{2}T^2)^{15}(1+q^2T+q^4T^2)^{15}(1-q^2T)^{21}(1-q^3T)}
}$} 
				\end{tabular}\\
			\end{tabular}}	
		\end{varwidth}
		\vspace{2mm}
	\end{center}
\end{table}
\end{itemize}

\textup{     }
\\
\linia
\vspace{1mm}
\begin{center}$G_{4.2}\colon\;\; \left\langle\left(\begin{array}{rrr} 1 & 0 & 0 \\ 0 & -1 & 0 \\ 0 & 0 & -1 \end{array}\right), \left(\begin{array}{rrr} -1 & 0 & 0 \\ 0 & 0 & -1 \\ 0 & -1 & 0 \end{array}\right) \right\rangle\simeq \ZZ_2\oplus \ZZ_2$ \end{center}
\vspace{1mm}

\linia
\begin{itemize}
\item[] \textbf{Hodge numbers: } $$h^{1,1}(\kum_{3}(E, G_{4.2}))=21\quad \textup{and} \quad h^{2,1}(\kum_{3}(E, G_{4.2}))=9$$
\item[] \textbf{Zeta function: } 

\begin{table}[H]
	\begin{center}
		\begin{varwidth}{\textheight}
			\resizebox{0.55\textheight}{!}{
				\begin{tabular}{c||c}
				\textrm{Case:}	 &
					\begin{tabular}{@{}l@{}} \setlength{\fboxrule}{1pt}\fcolorbox{white}{white}{$\displaystyle{Z_{q}\left(\kum_{3}(E, G_{4,2})\right)}$}
					\end{tabular} \\
					\hhline{=::=:}
					$(1,1,1,1)$ &  
					\begin{tabular}{@{}l@{}}
						\setlength{\fboxrule}{1pt}\fcolorbox{white}{white}{$\displaystyle{\frac{\left(1-(a_q^{3}-3 a_q q)T+q^3 T^2\right)\left(1-a_q qT +q^3T^{2}\right)^{9}}{(1-T)(1-qT)^{21}(1-q^2T)^{21}(1-q^3T)}}$} 
					\end{tabular} \\
					\hhline{=::=:}
					$(1,1,2)$ &
					\begin{tabular}{@{}l@{}} 
						\setlength{\fboxrule}{1pt}\fcolorbox{white}{white}{$\displaystyle{\frac{\left(1-(a_q^{3}-3 a_q q)T+q^3 T^2\right)\left(1-a_q qT +q^3T^{2}\right)^{9}}{(1-T)(1-qT)^{18}(1+qT)^{3}(1+q^2T)^{3}(1-q^2T)^{18}(1-q^3T)}}$} 
					\end{tabular}\\
				   \hhline{=::=:}
				$(1,3)$ &
				\begin{tabular}{@{}l@{}} 
					\setlength{\fboxrule}{1pt}\fcolorbox{white}{white}{$\displaystyle{\frac{\left(1-(a_q^{3}-3 a_q q)T+q^3 T^2\right)\left(1-a_q qT +q^3T^{2}\right)^{9}}{(1-T)(1-qT)^{15}(1+qT+q^{2}T^2)^{3}(1+q^2T+q^4T^2)^{3}(1-q^2T)^{15}(1-q^3T)}}$}
				\end{tabular}\\
			\end{tabular}}	
		\end{varwidth}
		\vspace{2mm}
	\end{center}
\end{table}
\end{itemize}

\newpage
\textup{     }
\\
\linia
\vspace{1mm}
\begin{center}$G_{4.3}\colon\;\;\left\langle\left(\begin{array}{rrr} -1 & 0 & 0 \\ 0 & 0 & 1 \\ 0 & 1 & 0 \end{array}\right), \left(\begin{array}{rrr} -1 & 0 & 0 \\ 1 & 0 & -1 \\ -1 & -1 & 0 \end{array}\right) \right\rangle\simeq \ZZ_2\oplus \ZZ_2$\end{center} 
\vspace{2mm}
\begin{center}$G_{4.4}\colon\;\; \left\langle\left(\begin{array}{rrr} -1 & 0 & 0 \\ 0 & 0 & 1 \\ 0 & 1 & 0 \end{array}\right), \left(\begin{array}{rrr} -1 & 1 & -1 \\ 0 & 0 & -1 \\ 0 & -1 & 0 \end{array}\right) \right\rangle\simeq \ZZ_2\oplus \ZZ_2$\end{center} 
\vspace{1mm}
\linia

\begin{itemize}
\item[] \textbf{Hodge numbers: } \begin{align*}&h^{1,1}(\kum_{3}(E, G_{4.3}))=h^{1,1}(\kum_{3}(E, G_{4.4}))=15 \\
    &h^{2,1}(\kum_{3}(E, G_{4.3}))=h^{2,1}(\kum_{3}(E, G_{4.4}))=3\end{align*}
\item[] \textbf{Zeta function: } 

\begin{table}[H]
	\begin{center}
		\begin{varwidth}{\textheight}
			\resizebox{0.55\textheight}{!}{
				\begin{tabular}{c||c}
				\textrm{Case:}	 &
					\begin{tabular}{@{}l@{}} \setlength{\fboxrule}{1pt}\fcolorbox{white}{white}{$\displaystyle{Z_{q}\left(\kum_{3}(E, G_{4,3})\right)=Z_{q}\left(\kum_{3}(E, G_{4,4})\right)}$}
					\end{tabular} \\
					\hhline{=::=:}
					$(1,1,1,1)$ &  
					\begin{tabular}{@{}l@{}}
						\setlength{\fboxrule}{1pt}\fcolorbox{white}{white}{$\displaystyle{\frac{\left(1-(a_q^{3}-3 a_q q)T+q^3 T^2\right)\left(1-a_q qT +q^3T^{2}\right)^{3}}{(1-T)(1-qT)^{15}(1-q^2T)^{15}(1-q^3T)}}$} 
					\end{tabular} \\
					\hhline{=::=:}
					$(1,1,2)$ &
					\begin{tabular}{@{}l@{}} 
						\setlength{\fboxrule}{1pt}\fcolorbox{white}{white}{$\displaystyle{\frac{\left(1-(a_q^{3}-3 a_q q)T+q^3 T^2\right)\left(1-a_q qT +q^3T^{2}\right)^{3}}{(1-T)(1-qT)^{12}(1+qT)^{3}(1+q^2T)^{3}(1-q^2T)^{12}(1-q^3T)}}$} 
					\end{tabular}\\
				   \hhline{=::=:}
				$(1,3)$ &
				\begin{tabular}{@{}l@{}} 
					\setlength{\fboxrule}{1pt}\fcolorbox{white}{white}{$\displaystyle{\frac{\left(1-(a_q^{3}-3 a_q q)T+q^3 T^2\right)\left(1-a_q qT +q^3T^{2}\right)^{3}}{(1-T)(1-qT)^{9}(1+qT+q^{2}T^2)^{3}(1+q^2T+q^4T^2)^{3}(1-q^2T)^{9}(1-q^3T)}}$}
				\end{tabular}\\
			\end{tabular}}	
		\end{varwidth}
		\vspace{2mm}
	\end{center}
\end{table}
\end{itemize}

\renewcommand*{\bibfont}{\fontsize{10}{11}\selectfont}
\printbibliography[heading=bibintoc,title={References}]
\end{document}